\documentclass[12pt,oneside,reqno]{amsart}

\usepackage{amssymb}
\usepackage[active]{srcltx}
\usepackage{a4wide}
\usepackage{amsmath,amsthm}
\usepackage{amssymb}
\usepackage{amstext}
\everymath{\displaystyle}
\usepackage{cite}
\usepackage{epsfig}
\usepackage{enumerate}
\usepackage{color}
 \usepackage{setspace} 
\usepackage{amssymb}
\newtheorem{theorem}{Theorem}[section]

\theoremstyle{definition}
\newtheorem{definition}[theorem]{Definition}

\theoremstyle{remark}

\usepackage{hyperref}
\hypersetup{colorlinks,allcolors=black}



\newcommand{\be}{\begin{equation}}
\newcommand{\ee}{\end{equation}}

\begin{document}

\title[Some analytical properties of  fractal functions in Lebesgue spaces]{Some analytical properties of multivariate fractal functions in Lebesgue spaces}

\author{Kiran Rani}
\address{Department of Mathematics, 
Punjab Engineering college
(Deemed to be University), 
Sector 12 Chandigarh 160012, India}
\email{kirankaushik2310@gmail.com}

\author{Rattan Lal}
\address{Department of Mathematics, 
Punjab Engineering college
(Deemed to be University), 
Sector 12 Chandigarh 160012, India}
\email{rattanlal@pec.edu.in}

\subjclass[2010]{Primary 28A80; Secondary 28A75, 46E35}

\keywords{Fractal Measures, Lebesgue Spaces, Fractal Operator, Schauder Basis.}

\begin{abstract}
In this article, we focus on the construction of multivariate fractal functions in Lebesgue spaces along with some properties of associated fractal operator. First, we give a detailed construction of the fractal functions belonging to Lebesgue spaces. 
Then, we give analytical properties of the defined fractal operator in Lebesgue spaces. We end this article by showing the existence of Schauder basis of the associated fractal functions for the space $\mathcal{L}^q(I^n, \mu_p)$.
\end{abstract}

\maketitle

\section{Introduction and Preliminaries} 
\noindent
Focusing on the main goal of constructing an attractor, Barnsley \cite{MF1, MF2} works investigates the generation of FIFS using IFS. The creation of surfaces in $R^3$ that operate as attractor for IFS is addressed  by Massopust \cite{Mas4}. The graphs of continuous functions from the oriented standard simplex $\sigma^2 \subset R^2$ into R are expressed using these surfaces.

Geronimo and Hardin \cite{GH} provide the provision of an algorithm for constructing FIS over polygonal regions with arbitrary interpolation points. Also, a class of invariant measures supported on these FIS is introduced. In another work \cite{Zhao}, Zhao presents an approach to construct the FIS using triangulation. Moreover, theoretically \cite{Zhao} proves that this constructions attractor are continuous FISs. Dalla \cite{Dalla} discussed a method for construction of bivariate FIFs by investigating whether the interpolation points on the edge are collinear. Later, he discussed the general case.
The fractal interpolation approach on a rectangular domain is revistied in \cite{Feng}. In contrast to conventional methods, it does not rely on similar/affine mappings in the Fractal Interpolation Function (FIF) or equal-spaced interpolation points.

 For each set of data, Malysz \cite{Mz} introduce a novel construction of the continuous bivariate fractal interpolation surface. Metzler and Yun
\cite{MYUN} provide a structure that incorporates a vertical contraction factor function utilizing a more generalized Iterated Function System (IFS). Ruan and Xu \cite{Ruan} provide a unique category of FISs called bilinear FISs. 
 Two bivariate functions formed on a rectangle are the source of the fractal surfaces shown in \cite{NMA}. There are no particular requirements that the data must meet.
 As a generalization,  Bouboulis et al. \cite{D2} present recurrent BFISs to increase the flexibility of image compression or natural-shape generation. In \cite{D1}, Bouboulis and Dalla present a method for creating FIFs that generates the fractal interpolant of a few predetermined points of $\mathbb{R}^n$ by utilizing ordinary interpolants of points of $\mathbb{R}^{n-1}$.
 
  Work in \cite{M21} proposes to modify conventional polynomials to create fractal interpolants. The study also suggests a technique for interpolating real data by building a fractal interpolation function from a classical approximant. In \cite{M2} M. A. Navascu\'es deals with Fractal Approximation.  Further \cite{SV, SP} explored more fractal approximation aspects.
Through the use of oscillation spaces and Hölder space, \cite{Mnew2} create non-stationary fractal surfaces across a rectangular domain and investigate the dimension of non-stationary FISs.
 Starting with the  construction of bivariate FIFs and FISs, \cite{SS4} study partial integrals and partial derivatives of these functions. Authors in \cite{VV3} re-examines the creation of bivariate fractal interpolation functions, aiming to develop a family of parameterized fractal functions that align with a given bivariate continuous function over a rectangular area in $R^2$.
 The results in \cite{NVV2} allows us to create a family of fractal functions connected to a continuous vector-valued function that is specified and described on the Sierpi\'{n}ski gasket. Later \cite{PV11} describes FIFs on the product of Sierpi\'{n}ski gasket.
  
 The estimations for the dimensions of the invariant measures related to fractal interpolation surfaces are discussed in \cite{ES}. Further, the fractal surface's projection, limitation, and associated measurements are explored.
 
 In view of a few complete spaces, \cite{KPV} presented a unique class of FIFs in multivariate known as fractal functions. Also, the conditions on the defining parameters for a class of fractal functions defined on a hyperrectangle in the Euclidean space $R^n$, such that the fractal functions are elements of some standard function spaces, such as Lebesgue spaces, the Sobolev spaces, and the Hölder spaces are discussed. Further in \cite{ES1}, Agrawal and Verma studied fractal surfaces on Sobolev and  H\"{o}lder spaces.
  For a given set of data points, \cite{VMT} create multivariate fractal interpolation functions and investigate the existence of the $\alpha$-fractal function that corresponds to the multivariate continuous function. This paper investigates the $\alpha$-fractal function's restriction on the coordinate axis. Additionally, the multivariate $\alpha -$fractal function's graph's box and Hausdorff dimensions, as well as its restriction, are examined. In the space of continuous functions, \cite{GVS} demonstrate the existence of a novel class of a-fractal functions devoid of endpoint conditions. They also include the existence of the same class in many spaces, including the oscillation space, the convex Lipschitz space, and the Hölder space. Next, \cite{RSP} investigates fractal surfaces and the associated fractal operator in Lebesgue spaces with respect to fractal measures. This paper deals with bivariate fractal functions. As we see in the literature survey it is still open to deal with multivariate fractal interpolation functions in Lebesgue spaces, so this work \cite{RSP} becomes the basic block to our current paper. The organisation of this paper is as follows: Firstly, we start with some definitions. Then in section 2, we follow the construction given by Ruan and Xu \cite{Ruan} to construct multivariate fractal interpolation functions. Next in Section 3, our focus is on Lebesgue spaces along with a key theorem. Then we discuss the associated fractal operator and its properties. Next, we explore fractal interpolation by using a specific germ function and a base function. Lastly, we discuss about the Schauder basis.  
 \newline Now, let us become familiar with some terms that we shall be using throughout the paper.
   
 For the definition and examples of IFS, self-similar IFS and attractor,  refer to \cite{Fal}.
\begin{definition}\cite{KRZ}
   Let $X$ be a normed space which contains a sequence $(x_m)$. If for every $y \in X$ there is a unique sequence of scalars $(\beta_m)$such that

\begin{equation*}
\| y - (\beta_1 x_1 + \cdots + \beta_m x_m) \| \rightarrow 0 \quad \text{(as } m \rightarrow \infty \text{),}
\end{equation*}

then $(x_n)$ becomes a Schauder basis (or basis) for $X$ .
\end{definition}


\begin{theorem}\emph{\cite{MF2}}
Let $(X,d)$ be a complete metric space and $\{X; \upsilon_1,\ldots \upsilon_m\}$ be the IFS corresponding to given metric space. Let $\boldsymbol{r}=(r_1,\ldots, r_m)$ be a vector of probability. Then, there exists a unique Borel probability measure $\mu_r$ such that 
\[\mu_r=\sum_{i=1}^{m}r_i \mu_r \circ \upsilon_{i}^{-1}.\]
Moreover, the support of $\mu_r$ is the attractor of the IFS.
\end{theorem}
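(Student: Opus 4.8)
The plan is to realise $\mu_r$ as the unique fixed point of a contraction on a complete metric space of probability measures, following Hutchinson's classical method. Write $s_i\in[0,1)$ for the contraction ratio of $\upsilon_i$ and set $s=\max_{1\le i\le m}s_i<1$. After enlarging the picture to a closed ball $B$ with $\upsilon_i(B)\subseteq B$ for every $i$ (such a ball exists because the $\upsilon_i$ are contractions, so it suffices to take the radius large compared to $\max_i d(\upsilon_i x_0,x_0)/(1-s)$), one may assume the ambient set is bounded; let $\mathcal{P}$ denote the set of Borel probability measures supported in it, equipped with the Hutchinson--Kantorovich metric
\[
d_H(\mu,\nu)=\sup\Big\{\int f\,d\mu-\int f\,d\nu\;:\;f\in\Lip(X),\ \Lip(f)\le 1\Big\}.
\]
The first step is to check that $(\mathcal{P},d_H)$ is a complete metric space, with $d_H$ metrising weak convergence of measures.

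Next I would introduce the Markov operator $M\colon\mathcal{P}\to\mathcal{P}$ defined by $M\mu=\sum_{i=1}^m r_i\,\mu\circ\upsilon_i^{-1}$ and show it is an $s$-contraction. The change-of-variables identity gives $\int f\,d(M\mu)=\sum_{i=1}^m r_i\int (f\circ\upsilon_i)\,d\mu$ for every bounded Borel $f$; if $\Lip(f)\le 1$ then each $f\circ\upsilon_i$ is $s$-Lipschitz, so testing the difference $M\mu-M\nu$ against such an $f$ and using $\sum_{i=1}^m r_i=1$ yields $d_H(M\mu,M\nu)\le s\,d_H(\mu,\nu)$. The Banach fixed point theorem then produces a unique $\mu_r\in\mathcal{P}$ with $M\mu_r=\mu_r$, which is precisely the stated self-referential equation. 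Uniqueness among \emph{all} Borel probability measures follows because any solution of the equation is automatically supported inside the forward-invariant ball $B$, hence already lies in $\mathcal{P}$.

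For the last assertion, put $K=\operatorname{supp}\mu_r$. Using that $\operatorname{supp}(\mu\circ\upsilon_i^{-1})=\overline{\upsilon_i(\operatorname{supp}\mu)}$ for continuous $\upsilon_i$, together with the fact that the support of a finite positive combination of measures is the closure of the union of the individual supports, the fixed point equation gives $K=\overline{\bigcup_{i=1}^m \upsilon_i(K)}$. Iterating, $K$ is covered by the $m^n$ sets $\upsilon_{w_1}\circ\cdots\circ\upsilon_{w_n}(K)$, each of diameter at most $s^n\operatorname{diam}(K)\to 0$, so $K$ is totally bounded and, being closed in a complete space, compact; hence $\bigcup_{i=1}^m\upsilon_i(K)$ is already closed and $K=\bigcup_{i=1}^m\upsilon_i(K)$. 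Since the attractor $A$ of the IFS is the unique nonempty compact set with $A=\bigcup_{i=1}^m\upsilon_i(A)$, we conclude $K=A$.

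I expect the genuine obstacle to be establishing completeness of $(\mathcal{P},d_H)$ and the identification of $d_H$-convergence with weak convergence (which in particular requires controlling the supports along a Cauchy sequence so that no mass escapes, e.g.\ via a Prokhorov tightness argument on the bounded set $B$). Once these facts are in place, the contraction estimate and the support computation are routine applications of the change-of-variables formula and elementary properties of supports.
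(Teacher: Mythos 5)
The paper does not prove this statement at all: it is quoted as a preliminary directly from Barnsley \cite{MF2}, so there is no internal proof to compare yours against. What you have written is the classical Hutchinson contraction argument, which is in substance the proof in the cited source, and its skeleton is sound: pass to a forward-invariant closed ball, define the Markov operator $M\mu=\sum_{i=1}^m r_i\,\mu\circ\upsilon_i^{-1}$, obtain the $s$-contraction estimate from $\Lip(f\circ\upsilon_i)\le s\,\Lip(f)$ together with $\sum_i r_i=1$, invoke Banach's fixed point theorem, and upgrade the support identity $K=\overline{\bigcup_i\upsilon_i(K)}$ to an exact self-similarity by first showing $K$ is compact and then appealing to uniqueness of the attractor.

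Three points need more care than you give them. First, completeness of $(\mathcal{P},d_H)$ for Borel probability measures on a bounded subset of a general complete metric space is genuinely delicate when $X$ is not separable (tightness, and the identification of $d_H$-convergence with weak convergence, can fail); the clean fix is to run the fixed-point argument on the probability measures carried by the attractor $A$ itself, which is compact, separable, and invariant under every $\upsilon_i$, so Prokhorov's theorem applies directly. Second, your claim that any Borel probability solution of the invariance equation is \emph{automatically} supported in $B$ is not automatic: a priori a fixed point could have unbounded support, and one must iterate the equation $n$ times, use that $\upsilon_{w_1}\circ\cdots\circ\upsilon_{w_n}(S)$ shrinks toward $A$ uniformly over words $w$ for each bounded $S$, and combine this with a tightness bound $\mu\bigl(X\setminus B(x_0,R)\bigr)<\epsilon$ to force the mass onto $A$; only then is uniqueness among all Borel probability measures secured. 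Third, the conclusion $\mathrm{supp}\,\mu_r=A$ (rather than the attractor of a sub-IFS) requires every $r_i$ to be strictly positive; this is implicit in your phrase ``finite positive combination'' but should be stated as a hypothesis.
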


\section{Preliminaries on Multivariate Fractal Interpolation function }   
For more specification of the topic, one can visit \cite{Ruan}. 
Let $n\geq2$ be a naural number.
Consider $\bigl\{(y_{1,j_1}, y_{2,j_2},\cdots,y_{n,j_n},x_{j_1 j_2 \cdots j_n})\in \mathbb{R}^{n+1}$:$(j_1,j_2,\cdots,j_n)\in\prod_{i=1}^{n} \Sigma_{N_i}\bigr\}$  the interpolation data set such that $y_{k,0}<y_{k,1}<\cdots<y_{k,N_k}$ for each k, $k=1,2,\cdots,n$. For any positive integer N,  $\Sigma_N=\{1,2,\dots,N\}, ~ \Sigma_{N,0}=\{0,1,\dots,N\}, ~ \partial\Sigma_{N,0}=\{0,N\} \text{ and }  \text{int}\Sigma_{N,0}=\{1,2,\dots,N-1\}$.  
Set $I_{k,i_k}=[y_{k,j_{k-1}} , y_{k,j_k}]$ for $j_k=1,2,\cdots,N_k$ and $I_k=\bigcup_{j_k=1}^{N_k} I_{k,j_k}$ for $k\in \Sigma_n$ . Further, for $j_k\in \Sigma_{N_k}$, define $v_{k,j_k}:I_k\to I_{k,j_k}$ with
\begin{equation} \label{D1}
    \begin{split}
     &v_{k,j_k}(y_{k,0})=y_{k,j_k-1},~v_{k,j_k}(y_{k,N_k})=y_{k,j_k}, \ \text{if}\ j_k \ \text{is odd}\\
     &v_{k,j_k}(y_{k,0})=y_{k,j_k},~v_{k,j_k}(y_{k,N_k})=y_{k,j_k-1}, \ \text{if} \ j_k \ \text{is even},\\
    &\lvert v_{k,j_k}(z)-v_{k,j_k}(z_*) \rvert\leq \gamma_{k,j_k}\lvert z-z_* \rvert, \ \text{for}\ z,z_*\in I_k,\\
 \end{split}
\end{equation}  
where $0<\gamma_{k,j_k}<1$ is a contraction factor.
It is easy to verify that
\begin{equation*}
    \begin{split}
    &v_{k,j_k}^{-1}(y_{k,j_k})=v_{k,j_k+1}^{-1}(y_{k,j_k}), \text{ for }  j_k\in \text{int}\Sigma_{N_k,0},k\in \Sigma_n \\
    \end{split}
\end{equation*}
For simplification, we have to define $\tau:\mathbb{Z}\times\{0,N_1,N_2,\cdots,N_n\} \to \mathbb{Z}$ as follows:
\begin{equation*}
    \begin{split}
     \tau(j,0)= \left\{\begin{array}{rcl}j-1, & \mbox{if} \  j \ \mbox{is odd,}\\j, & \mbox{if} \ j \ \mbox{is even,}\end{array}\right. \  \text{and} \\
     \tau(j,N_1)=\tau(j,N_2)=\cdots=\tau(j,N_n)= \left\{\begin{array}{rcl}j, & \mbox{if} \  j \ \mbox{is odd,}\\j-1, & \mbox{if} \ j \ \mbox{is even.}\end{array}\right.
    \end{split}
\end{equation*}
It is easy to see that $v_{k,j_k}(y_{k,i_k})=y_{k,{\tau(j_k,i_k)}},\forall\ j_k\in \Sigma_{N_k}, i_k\in\partial\Sigma_{N_k,0}, k\in \Sigma_n$ . Let $M=I^{n}\times\mathbb{R}$. Moreover, for each $(j_1,j_2,\cdots,j_n)\in\prod_{i=1}^{n} \Sigma_{N_i}$, define a continuous function $H_{j_1,j_2,\cdots,j_n}:M\to\mathbb{R}$ fulfilling
\begin{equation*}
    \begin{split}
       H_{j_1 j_2 \cdots j_n}(y_{1,k_1},y_{2,k_2},\cdots,y_{n,k_n},x_{(1,k_1)\cdots(n,k_n)})=x_{(1,\tau(j_1,k_1))\cdots(n,\tau(j_n,k_n))}, \\  \text{for} \ (k_1,k_2,\cdots,k_n)\in \prod_{i=1}
^n\partial\Sigma_{N_i,0}, \text{ and}\\
        \lvert H_{j_1 j_2 \cdots j_n}(y_1,y_2,\cdots,y_n,z)-H_{j_1 j_2 \cdots j_n}(y_1,y_2,\cdots,y_n,z_*) \rvert \leq\delta_{j_1 j_2\cdots j_n} \lvert z-z_*\rvert,\\  \text{for} \ (y_1,y_2,\cdots,y_n) \in I^{n}; \ z, z_*\in \mathbb{R},
    \end{split}
\end{equation*}
where $0<\delta_{j_1 j_2\cdots j_n}<1$ is a contraction factor. \\
Define 
\begin{equation*}
\begin{split}
\mathcal{C}_*(I^{n}):=\bigl\{g\in\mathcal{C}(I^{n})&:g(y_{1,j_1},y_{2,j_2},\cdots,y_{n,j_n})=x_{(1,j_1)\cdots(n,j_n)},\\& \forall~(j_1,j_2,\cdots,j_n)\in\prod_{i=1}^{n} \Sigma_{N_i,0}\bigr\}.
\end{split}
\end{equation*}
Let $T$ be a Read-Bajraktarevi$\acute{c}$ (RB) operator on $\mathcal{C}_*(I^{n})$ as follows
$$T(g)(y_1,y_2,\cdots,y_n):=H_{j_1 j_2 \cdots j_n}(v_{1,j_1}^{-1}(y_1),\cdots,v_{n,j_n}^{-1}(y_n),g(v_{1,j_1}^{-1}(y_1),\cdots,v_{n,j_n}^{-1}(y_n))),$$ 
for $(y_1,y_2,\cdots,y_n)\in \prod_{k=1}^{n} I_{k,j_k},~(j_1,j_2,\cdots,j_n)\in\prod_{i=1}^{n} \Sigma_{N_i,0}$.\\  With factor $\max_{j_1 j_2\cdots j_n}(\delta_{j_1 j_2\cdots j_n})$,  $T$ becomes a contraction. So, by Banach fixed point theorem,  $T$ have a unique fixed point, say, $h$ which interpolates the given data set and meets the upcoming equation 
\begin{equation*}
\begin{split}
&h(y_1,y_2,\cdots,y_n)=H_{j_1 j_2\cdots j_n}(v_{1,j_1}^{-1}(y_1),\cdots,v_{n,j_n}^{-1}(y_n),h(v_{1,j_1}^{-1}(y_1),\cdots,v_{n,j_n}^{-1}(y_n))), \\ & \text{for} ~ (y_1,y_2,\cdots,y_n)\in \prod_{k=1}^{n} I_{k,j_k},~(j_1,j_2,\cdots,j_n)\in\prod_{i=1}^{n} \Sigma_{N_i}. \\& \text{Eventually, for\ each}~(j_1,j_2,\cdots,j_n)\in\prod_{i=1}^{n} \Sigma_{N_i}, \\& \text{Define} \ S_{j_1 j_2\cdots j_n}:M\to \prod_{k=1}^nI_{k,j_k}\times\mathbb{R}\ \text{as} 
\\& S_{j_1 j_2\cdots j_n}(y_1,y_2,\cdots,y_n,x)=\bigl(v_{1,j_1}(y_1),\cdots,v_{n,jn}(y_n),H_{j_1 j_2\cdots j_n}(y_1,y_2,\cdots,y_n,x)\bigr),\\& \text{ for }(y_1,y_2,\cdots,y_n,x)\in M.
\end{split}
\end{equation*}
Therefore, $\{M;~S_{j_1 j_2\cdots j_n}:(j_1,j_2,\cdots,j_n)\in\prod_{i=1}^{n} \Sigma_{N_i}\}$ is an IFS. From \cite[Theorem 3.1]{Ruan}, it can be inferred that the attractor of the IFS is $G(h),$ that is,  
$$G(h)=\bigcup_{j_1 j_2 \cdots j_n}S_{j_1 j_2 \cdots j_n}(G(h)),$$ 
where $G(h)=\{(y_1,y_2,\cdots,y_n,h(y_1,y_2,\cdots,y_n)):(y_1,y_2,\cdots,y_n)\in I^{n}\}.$ The function $h$ and $G(h)$ are referred as FIF and FIS in reference to the above IFS, respectively. The functions $v_{k,j_k} \text{ for } (j_1,j_2,\cdots,j_n)\in\prod_{i=1}^{n} \Sigma_{N_i},$ stated above can be selected as 
\begin{equation}\label{D5}
    \begin{aligned}
    & v_{k,j_k}(y)=\alpha_{k,j_k}(y)+ \beta_{k,j_k}, \text{ for } y\in I^{n},\\     
    \end{aligned}
     \end{equation}
here, we can determine constants $\alpha_{k,j_k}, \beta_{k,j_k}$ using \eqref{D1}  as 
\begin{equation*}
    \begin{aligned}
    \left\{\begin{array}{rcl}\alpha_{k,j_k}=\frac{y_{k,j_k}-y_{k,j_{k-1}}}{y_{k,N_k}-y_{k,0}}, & \beta_{k,j_k}=\frac{y_{k,j_k} y_k -y_{k,j_{k-1}}  y_{k,0}}{y_{k,N_k}-y_{k,0}}, & \mbox{if} \ j_k \ \mbox{is odd,} \\ \alpha_{k,j_k}=\frac{y_{k,j_{k-1}}-y_{k,j_k}}{y_{k,N_k}-y_{k,0}}, &  \beta_{k,j_k}=\frac{y_{k,j_{k-1}} y_{k,N_k} -y_{k,j_k}  y_{k,0}}{y_{k,N_k}-y_{k,0}}, & \mbox{if} \ j_k \ \mbox{is even.}\end{array}\right. \ 
    \end{aligned}
\end{equation*}
\begin{center}
    \section{Main Results}
\end{center}
Here, we explore the Lebesgue spaces, with a focus on invariant measures that are related to Iterated Function Systems (IFS). 
\subsection{ Lebesgue spaces with respect to invariant measures}\label{Lpspace}

For simplification, let us denote $\mathcal{I}^{n}=I_1\times I_2 \times \cdots \times I_n.$
Let $\boldsymbol{p}=(p_{11\cdots1},\ldots, p_{N_1 N_2\cdots N_n})$ be a vector of probabilities  and $\mu_p$ be the invariant measure associated with this probability vector generated by the Iterated Function System (IFS) $\{\mathcal I^{n}; v_{k,j_k}:(j_1,\cdots,j_n) \in \prod_{i=1}^{n} \Sigma_{N_i} ,k\in \Sigma_n  \}$ such that
\[\mu_{p}=\sum_{(j_1,\cdots,j_n) \in \prod_{i=1}^{n} \Sigma_{N_i}}p_{j_1 j_2\cdots j_n}\mu_{p}\circ (v_{1,j_1}^{-1},\cdots,v_{n,j_n}^{-1}).\]
To clear the above notation, for $B \subseteq \mathcal{I}^n $ , \[\mu_{p}(B)=\sum_{(j_1,\cdots,j_n) \in \prod_{i=1}^{n} \Sigma_{N_i}}p_{j_1 j_2\cdots j_n}\mu_{p} (\Theta_{j_i j_2\cdots j_n}^{-1}(B)),\]
where $\Theta_{j_i j_2\cdots j_n}^{-1} :\mathcal{I}^{n}\rightarrow \mathcal{I}^{n} $ is defined as \\ $ \Theta_{j_i j_2\cdots j_n} (y_1,y_2,\cdots,y_n)=(v_{1,j_1}(y_1),\cdots,v_{n,j_n}(y_n)) $ \\
For $1\leq q < \infty$, let $L^{q}(\mathcal I^{n},\mu_{p})$  denote the space of all $q$-integrable functions on $\mathcal I^{n}$ associated with $\mu_{p}$, where
\[\mathcal{L}^{q}(\mathcal I^{n},\mu_{p})=\left\{f: \mathcal I^{n} \rightarrow \mathbb{R}: \int_{\mathcal I^{n}}\lvert f(y_1,y_2,\cdots ,y_n)\rvert ^{q}d\mu_{p}(y_1,y_2,\cdots ,y_n)< \infty\right\}.\]
 The normed space $\left(L^{q}(\mathcal I^{n},\mu_{p}), \lVert\cdot \rVert_q\right)$ is a complete normed by $\lVert \cdot \rVert_q$, where 
\[\lVert f \rVert_q=\left(\int_{\mathcal{I}^n}\lvert f(y_1,y_2,\cdots ,y_n)\rvert^q d\mu_{p}(y_1,y_2,\cdots ,y_n)\right)^{1/q}.\]
In relation to the measure $\mu_p$, for $q=\infty$, we establish the essential sup norm $\lVert \cdot \rVert_{\infty}$  such that $\lVert g \rVert_{\infty}=ess\sup \lvert g \rvert$ and in context to this, the space $L^{\infty}(\mathcal I^{n}, \mu_p)=\{g:\mathcal I^{n}  \rightarrow \mathbb{R}:~ \lVert g \rVert_{\infty}< \infty\}$ becomes complete.
\begin{theorem}\label{FIFinLp}
If $\alpha \in L^{\infty}(\mathcal I^{n},\mu_p)\text{ and } q_{j_1 j_2 \cdots j_n}\in L^{q}(\mathcal I^{n},\mu_{p})$ for all $(j_1,\cdots,j_n) \in \prod_{i=1}^{n} \Sigma_{N_i}$, then the associated FIF $h^*\in L^{q}(\mathcal I^{n},\mu_{p})$, provided $\lVert \alpha \rVert_{\infty}< 1$.
\end{theorem}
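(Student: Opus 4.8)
\noindent The plan is to realise $h^{*}$ as the unique fixed point of the Read--Bajraktarevi\'{c} operator $T$ when it is regarded as acting on the complete space $L^{q}(\mathcal{I}^{n},\mu_{p})$ rather than on $\mathcal{C}_{*}(I^{n})$; the point of allowing the germs $q_{j_{1}\cdots j_{n}}$ to be merely $L^{q}$ is precisely that one cannot simply invoke the continuous (or $L^{\infty}$) theory already at hand. Here $q_{j_{1}\cdots j_{n}}$ plays the role of $H_{j_{1}\cdots j_{n}}(\cdot,0)$ and the Lipschitz constant of $x\mapsto H_{j_{1}\cdots j_{n}}(y,x)$ is controlled by $\alpha$, i.e.\ it is at most $\lVert\alpha\rVert_{\infty}$. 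The engine of the proof will be the self-referential identity $\mu_{p}=\sum_{(j_{1},\ldots,j_{n})\in\prod_{i=1}^{n}\Sigma_{N_{i}}}p_{j_{1}\cdots j_{n}}\,\mu_{p}\circ\Theta_{j_{1}\cdots j_{n}}^{-1}$, used as a surrogate for a change-of-variables formula together with the transport rule $\int\psi\,d(\mu_{p}\circ\Theta_{j_{1}\cdots j_{n}}^{-1})=\int(\psi\circ\Theta_{j_{1}\cdots j_{n}})\,d\mu_{p}$.

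First I would show that $T$ maps $L^{q}(\mathcal{I}^{n},\mu_{p})$ into itself. Since $\Theta_{j_{1}\cdots j_{n}}$ carries $\mathcal{I}^{n}$ onto the block $\prod_{k=1}^{n}I_{k,j_{k}}$ on which $T(g)$ is given by the $(j_{1},\ldots,j_{n})$-piece, one has $T(g)\circ\Theta_{j_{1}\cdots j_{n}}=H_{j_{1}\cdots j_{n}}(\cdot,g(\cdot))$ $\mu_{p}$-a.e., and hence
\[
\lVert T(g)\rVert_{q}^{q}=\sum_{(j_{1},\ldots,j_{n})\in\prod_{i=1}^{n}\Sigma_{N_{i}}}p_{j_{1}\cdots j_{n}}\int_{\mathcal{I}^{n}}\bigl|H_{j_{1}\cdots j_{n}}(y,g(y))\bigr|^{q}\,d\mu_{p}(y).
\]
Bounding the integrand by $\bigl(|q_{j_{1}\cdots j_{n}}(y)|+\lVert\alpha\rVert_{\infty}|g(y)|\bigr)^{q}$ and invoking Minkowski's inequality in $L^{q}(\mathcal{I}^{n},\mu_{p})$, together with $q_{j_{1}\cdots j_{n}}\in L^{q}(\mathcal{I}^{n},\mu_{p})$, $g\in L^{q}(\mathcal{I}^{n},\mu_{p})$, $\alpha\in L^{\infty}(\mathcal{I}^{n},\mu_{p})$ and the finiteness of the index set, each summand --- hence the whole sum --- is finite, so $T(g)\in L^{q}(\mathcal{I}^{n},\mu_{p})$.

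Next I would run the identical computation on a difference $T(g_{1})-T(g_{2})$: the Lipschitz bound in the last slot of each $H_{j_{1}\cdots j_{n}}$ and $\sum_{(j_{1},\ldots,j_{n})}p_{j_{1}\cdots j_{n}}=1$ give $\lVert T(g_{1})-T(g_{2})\rVert_{q}\le\lVert\alpha\rVert_{\infty}\lVert g_{1}-g_{2}\rVert_{q}$, so $T$ is a contraction on $L^{q}(\mathcal{I}^{n},\mu_{p})$ because $\lVert\alpha\rVert_{\infty}<1$. Completeness of $L^{q}(\mathcal{I}^{n},\mu_{p})$ and the Banach fixed point theorem then furnish a unique fixed point in this space, which, being a fixed point of $T$, is the associated fractal function $h^{*}$; therefore $h^{*}\in L^{q}(\mathcal{I}^{n},\mu_{p})$. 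The step demanding the most care is the measure-theoretic bookkeeping behind the displayed formula for $\lVert T(g)\rVert_{q}^{q}$: establishing that $T(g)$ is $\mu_{p}$-measurable and that $T(g)\circ\Theta_{j_{1}\cdots j_{n}}=H_{j_{1}\cdots j_{n}}(\cdot,g(\cdot))$ holds $\mu_{p}$-almost everywhere --- in particular that the shared faces of the blocks $\prod_{k=1}^{n}I_{k,j_{k}}$ carry no $\mu_{p}$-mass, or else that the matching conditions on the $v_{k,j_{k}}$ and on the $H_{j_{1}\cdots j_{n}}$ make the piecewise definition unambiguous there. Everything after that is the familiar Banach-fixed-point argument transplanted from $\mathcal{C}_{*}(I^{n})$ to the Lebesgue setting.
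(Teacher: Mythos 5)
Your proposal is correct and follows essentially the same route as the paper: both realise $h^{*}$ as the Banach fixed point of the Read--Bajraktarevi\'{c} operator on a complete $L^{q}$ space, and both obtain the contraction constant $\lVert\alpha\rVert_{\infty}$ by splitting the integral over the blocks $\prod_{k}I_{k,j_{k}}$ and invoking the invariance identity $\mu_{p}=\sum p_{j_{1}\cdots j_{n}}\,\mu_{p}\circ\Theta_{j_{1}\cdots j_{n}}^{-1}$ together with $\sum p_{j_{1}\cdots j_{n}}=1$. The only differences are cosmetic: the paper works on a closed subspace $L^{q}_{0}$ encoding boundary conditions and merely asserts that $T$ is well defined, whereas you spell out the self-map estimate via Minkowski's inequality and flag the measure-zero issue on the shared faces --- both points the paper leaves implicit.
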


\begin{proof}
Consider a subset of $L^{q}(\mathcal I^{n},\mu_{p})$ namely\\ $L^{q}_{0}(\mathcal I^{n},\mu_{p})=\left\{g\in L^{q}(\mathcal I^{n},\mu_{p}):~ g(x_1,x_2,\cdots,x_n)=(y_1,y_2,\cdots,y_n) \forall (x_1,x_2,\cdots,x_n) \in \partial \mathcal{I}^{n} \right\}$. One can easily prove  that $L^{q}_{0}(\mathcal I^{n},\mu_{p})$ is a closed subset of the given space $L^{q}(\mathcal I^{n},\mu_{p})$. We know that a closed subspace of a complete normed space is complete. Hence $L^{q}_{0}$ is a complete subspace with respect to the induced metric by $\lVert \cdot \rVert_q$. Define a Read-Bajraktarevi$\acute{c}$ (RB) operator $T:L^{q}_{0}(\mathcal I^{n},\mu_{p})\rightarrow L^{q}_{0}(\mathcal I^{n},\mu_{p})$ such that 
\begin{equation}
    \begin{split}
    (Tg)(y_1,\cdots ,y_n)=& \alpha(y_1,\cdots ,y_n)(g(v_{1,j_1}^{-1}(y_1),\cdots,v_{n,j_n}^{-1}(y_n) \\&
    -q_{j_1 j_2 \cdots j_n}(v_{1,j_1}^{-1}(y_1),\cdots,v_{n,j_n}^{-1}(y_n))),
\end{split}
\end{equation}

for $(y_1,\cdots ,y_n)\in \prod_{i=1}^{n} I_{k,j_k}$ and $(j_1,\cdots,j_n) \in \prod_{i=1}^{n} \Sigma_{N_i} ,k\in \Sigma_n.$
By the assumptions on $q_{j_1 j_2 \cdots j_n}$ and $v_{k,j_k}$, we can easily see that $T$ is well-defined. Now consider functions $f,g \in L^{q}_{0}(\mathcal I^{n},\mu_{p})$, then we have
\begin{align*}
    &\lVert Tf-Tg \rVert_{q}^{q} \\&=\int_{\mathcal I^{n}} \lvert (Tf)(y_1,y_2,\cdots,y_n)-(Tg)(y_1,y_2,\cdots,y_n) \rvert^{q} d\mu_{p}(y_1,y_2,\cdots,y_n)\\
    &= \sum_{(j_1,\cdots,j_n) \in \prod_{i=1}^{n} \Sigma_{N_i}} \int_{\prod_{k=1}^{n} I_{k,j_k}}\lvert \alpha(y_1,y_2,\cdots,y_n) f(v_{1,j_1}^{-1}(y_1),\cdots,v_{n,j_n}^{-1}(y_n))- \\ & \hspace{3cm} \alpha(y_1,y_2,\cdots,y_n) g(v_{1,j_1}^{-1}(y_1),\cdots,v_{n,jn}^{-1}(y_n)) \rvert^q d\mu_{p}(y_1,y_2,\cdots,y_n)\\
    &=\sum_{(j_1,\cdots,j_n) \in \prod_{i=1}^{n} \Sigma_{N_i}} \int_{\prod_{k=1}^{n} I_{k,j_k}}\lvert \alpha(y_1,y_2,\cdots,y_n) (f-g)(v_{1,j_1}^{-1}(y_1),\cdots,v_{n,j_n}^{-1}(y_n)) \rvert^q \\ & \hspace{+5cm} \cdot d\mu_{p} (y_1,y_2,\cdots,y_n)\\
    &\leq \sum_{(j_1,\cdots,j_n) \in \prod_{i=1}^{n} \Sigma_{N_i}} \lVert \alpha \rVert_{\infty}^{q}\int_{\prod_{k=1}^{n} I_{k,j_k}} \lvert (f-g)(v_{1,j_1}^{-1}(y_1),\cdots,v_{n,j_n}^{-1}(y_n))\rvert^q \\& \hspace{+5cm} \cdot d\mu_{p}(y_1,y_2,\cdots,y_n)\\
    \end{align*}
    \begin{align*}
    &= \sum_{(j_1,\cdots,j_n) \in \prod_{i=1}^{n} \Sigma_{N_i}} \lVert \alpha \rVert_{\infty}^{q}\int_{\prod_{k=1}^{n} I_{k,j_k}} \lvert (f-g)(v_{1,j_1}^{-1}(y_1),\cdots,v_{n,j_n}^{-1}(y_n))\rvert^q \\& \cdot  d\left(\sum_{(i_1,\cdots ,i_n)\in \prod_{k=1}^{n} \Sigma_{N_k}} p_{i_1\cdots i_n}\mu_{p} (v_{1,i_1}^{-1}(y_1),\cdots,v_{n,i_n}^{-1}(y_n))\right) \\
    &= \sum_{(j_1,\cdots,j_n) \in \prod_{i=1}^{n} \Sigma_{N_i}} \lVert \alpha \rVert_{\infty}^{q} \int_{I_{\prod_{k=1}^{n} I_{k,j_k}}}\lvert (f-g)(v_{1,j_1}^{-1}(y_1),\cdots,v_{n,j_n}^{-1}(y_n))\rvert^q \\ & \hspace{+5cm} \cdot p_{j_1 j_2 \cdots j_n} d(\mu_{p} (v_{1,j_1}^{-1}(y_1),\cdots,v_{n,j_n}^{-1}(y_n)))\\
    &= \sum_{(j_1,\cdots,j_n) \in \prod_{i=1}^{n} \Sigma_{N_i}} \lVert \alpha \rVert_{\infty}^{q} p_{j_1j_2 \cdots j_n} \int_{\mathcal I^{n}} \lvert (f-g)(y_1,y_2,\cdots,y_n) \rvert^q d\mu_{p} (y_1,y_2,\cdots,y_n)\\
    &= \sum_{(j_1,\cdots,j_n) \in \prod_{i=1}^{n} \Sigma_{N_i}} \lVert \alpha \rVert_{\infty}^{q}   p_{j_1j_2 \cdots j_n} \lVert f-g \rVert_{q}^{q}.
    \end{align*} 
    This implies that \[ \lVert Tf-Tg \rVert_{q}^{q} \leq \lVert \alpha \rVert_{\infty}^{q}  
\sum_{(j_1,\cdots,j_n) \in \prod_{i=1}^{n} \Sigma_{N_i}}  
p_{j_1 j_2 \cdots j_n} \lVert f-g \rVert_{q}^{q} \], since \[\sum_{(j_1,\cdots,j_n) \in \prod_{i=1}^{n} \Sigma_{N_i}}  p_{j_1 j_2 \cdots j_n}=1\] which further implies that $$\lVert Tf-Tg \rVert_{q}\leq \lVert \alpha \rVert_{\infty}\lVert f-g \rVert_{q},$$ that is, $T$  becomes a contraction. So, by Banach fixed point theorem,  $T$ have a unique fixed point, say, $h^*$ such that 
\begin{equation*}
\begin{split}
    h^*(y_1,y_2,\cdots,y_n)=&\alpha(y_1,y_2,\cdots,y_n) h^*(v_{1,j_1}^{-1}(y_1),\cdots,v_{n,j_n}^{-1}(y_n))\\&+q_{j_1j_2 \cdots j_n}(v_{1,j_1}^{-1}(y_1),\cdots,v_{n,j_n}^{-1}(y_n)).
    \end{split}
\end{equation*}
for all $(y_1,y_2,\cdots y_n)\in \prod_{k=1}^{n} I_{k,j_k}$, where$(j_1,\cdots,j_n)\in\prod_{i=1}^{n}\Sigma_{N_i}$.\\
Hence the proof.
\end{proof}
Next, the idea and characteristics of the $\alpha$-fractal operator $\mathcal{F}_{\Delta, L}^\alpha$, a linear operator related to fractal functions and Lebesgue spaces, are discussed. In addition, germ functions, linear operators, and scaling functions ($\alpha$) that meet particular requirements are introduced.
\subsection{ Associated Fractal Operator and its Properties}
 Consider a continuous function $f$ on $\mathcal I^{n}$ , which satisfy the initial data set and termed as germ function. Also, consider a bounded linear operator \( L:\mathcal{L}^q(\mathcal I^{n}, \mu_p)\to\mathcal{L}^q(\mathcal I^{n}, \mu_p) \)  such that for all $h\in \mathcal{L}^q(\mathcal I^{n}, \mu_p),~ Lh(y_1,y_2,\cdots ,y_n)=h(y_1,y_2,\cdots ,y_n), \text{ where } (y_1,y_2,\cdots ,y_n) \in \prod_{i=1}^{n}\partial\Sigma_{N_i,0}.$ Choose functions $F_{j_1 j_2 \cdots j_n}, \text{ for } (j_1,\cdots ,j_n)\in \prod_{i=1}^{n} \Sigma_{N_i}.$ as 
 \begin{equation*}
   \begin{split}
       F_{j_1\cdots j_n}(y_1,y_2, \cdots ,y_n,x)=&\alpha(v_{1,j_1}(y_1),\cdots,v_{n,j_n}(y_n))x+f(v_{1,j_1}(y_1),\cdots,v_{n,j_n}(y_n))-\\&\alpha(v_{1,j_1}(y_1),\cdots,v_{n,j_n}(y_n))(Lf)(y_1,y_2,\cdots ,y_n),
   \end{split} 
\end{equation*}
where $(y_1,y_2, \cdots ,y_n,x)\in M$. Here, the functions $\alpha \in \mathcal{L}^{\infty}(\mathcal I^{n}, \mu_p)$ satisfy the condition $\|\alpha\|_{\infty}<1$  and this function  $\alpha$ is termed as the scaling function. The fixed point of the RB operator is $f^{\alpha}$ which is an  $\alpha$-FIF. Corresponding to this $\alpha$-FIF, the RB operator is described as follows:
\begin{equation}\label{D3}
    \begin{split}
    T^{\alpha}(g)(y_1,y_2, \cdots ,y_n)=&f(y_1,y_2, \cdots ,y_n)+\alpha(y_1,y_2, \cdots ,y_n)(g(v_{1,j_1}^{-1}(y_1),\cdots,v_{n,j_n}^{-1}(y_n))\\&
    -Lf(v_{1,j_1}^{-1}(y_1),\cdots,v_{n,j_n}^{-1}(y_n))),
    \end{split}
\end{equation}
for $(y_1,y_2, \cdots ,y_n)\in \prod_{k=1}^{n}I_{k,j_k}$ and $(j_1,j_2,\cdots ,j_n)\in \prod_{i=1}^{n} \Sigma_{N_i},$ and obeys the self-referential equation as
\begin{equation}\label{D4}
    \begin{aligned}
    f^{\alpha}(y_1,y_2, \cdots ,y_n)=&f(y_1,y_2, \cdots ,y_n)+\alpha(y_1,y_2, \cdots ,y_n) (f^{\alpha}(v_{1,j_1}^{-1}(y_1),\cdots,v_{n,j_n}^{-1}(y_n))\\&-(Lf)(v_{1,j_1}^{-1}(y_1),\cdots,v_{n,j_n}^{-1}(y_n))),
    \end{aligned}
\end{equation}
for $(y_1,y_2, \cdots ,y_n)\in \prod_{k=1}^{n}I_{k,j_k}$ and $(j_1,j_2,\cdots ,j_n)\in \prod_{i=1}^{n} \Sigma_{N_i}.$\\
Consequently, there exists a bounded linear operator $\mathcal{F}_{\Delta, L}^\alpha:\mathcal{L}^q(\mathcal I^{n}, \mu_p) \to \mathcal{L}^q(\mathcal I^{n}, \mu_p)$, defined as $\mathcal{F}_{\Delta, L}^\alpha(f)=f^{\alpha} \text{ for } f\in \mathcal{L}^q(\mathcal I^{n}, \mu_p).$   corresponding to the net $\Delta$, a base operator $L$, and a germ function $f$ , the operator $\mathcal{F}_{\Delta, L}^\alpha$ is termed as $\alpha$-fractal operator or simply fractal operator.\\
\newline

\begin{theorem} \label{thm3}
Let $Id \in \mathcal{L}^q(\mathcal I^{n}, \mu_p)$ be the   the identity operator.  We get the following results:
\begin{enumerate}
    \item If $f \in \mathcal{L}^q(\mathcal I^{n}, \mu_p)$, then the perturbation error satisfies the  relation:
$$
\left\|f^\alpha-f\right\|_{q} \leq \frac{\|\alpha\|_{\infty}}{1-\|\alpha\|_{\infty}}\|f-L f\|_{q} .
$$
In the particular case $\alpha=0$, we get $\mathcal{F}_{\Delta, L}^\alpha=I d$.

\item The fractal operator $\mathcal{F}_{\Delta, L}^\alpha$ is a bounded linear operator  on the space $\mathcal{L}^q(\mathcal I^{n}, \mu_p)$ and  we obtain the following estimate on its  operator norm 
$$
\left\|\mathcal{F}_{\Delta, L}^\alpha\right\| \leq 1+\frac{\|\alpha\|_{\infty}\|I d-L\|}{1-\|\alpha\|_{\infty}}
$$

\item  Under the condition $\|\alpha\|_{\infty}<\|L\|^{-1},$ $  \mathcal{F}_{\Delta, L}^\alpha$ is bounded below and one to one.
\item \label{eqn34} Under the condition $\|\alpha\|_{\infty}<(1+\|I d-L\|)^{-1}$,  $\mathcal{F}_{\Delta, L}^\alpha$ has a bounded inverse and as a result,  a topological automorphism . We also have,
$$
\left\|\left(\mathcal{F}_{\Delta, L}^\alpha\right)^{-1}\right\| \leq \frac{1+\|\alpha\|_{\infty}}{1-\|\alpha\|_{\infty}\|L\|} .
$$ 
\item The fixed points of $L$ and the $\mathcal{F}_{\Delta, L}^\alpha$ are same, whenever $\|\alpha\|_{\infty} \neq 0.$
\item  The fractal operator norm satisfies $1 \leq\left\|\mathcal{F}_{\Delta, L}^\alpha\right\|,$ whenever 1 belongs to the point spectrum of $L.$
\item Under the condition $\|\alpha\|_{\infty}<\|L\|^{-1}$, the operator $\mathcal{F}_{\Delta, L}^\alpha$ fails to be  a compact operator.
\item Assuming the condition $\|\alpha\|_{\infty}<(1+\|I d-L\|)^{-1}$,  $\mathcal{F}_{\Delta, L}^\alpha$  is a Fredholm operator with an index of zero.
\end{enumerate}
\end{theorem}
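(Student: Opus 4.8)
The plan is to reduce all eight items to two elementary consequences of the self-referential equation \eqref{D4}. First I would recast the $\alpha$-fractal operator in linear-operator form. Define $A\colon \mathcal L^q(\mathcal I^n,\mu_p)\to\mathcal L^q(\mathcal I^n,\mu_p)$ by
\[
(Ag)(y_1,\cdots,y_n)=\alpha(y_1,\cdots,y_n)\,g\bigl(v_{1,j_1}^{-1}(y_1),\cdots,v_{n,j_n}^{-1}(y_n)\bigr)\quad\text{on }\textstyle\prod_{k=1}^n I_{k,j_k}.
\]
The computation already carried out inside the proof of Theorem \ref{FIFinLp} (invariance of $\mu_p$ together with $\sum_{(j_1,\cdots,j_n)}p_{j_1 j_2\cdots j_n}=1$) shows that $A$ is well defined and $\|A\|\le\|\alpha\|_\infty<1$; hence $Id-A$ is invertible with $\|(Id-A)^{-1}\|\le(1-\|\alpha\|_\infty)^{-1}$. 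Now \eqref{D4} says precisely that $(Id-A)f^\alpha=(Id-AL)f$, so that
\[
\mathcal F_{\Delta,L}^\alpha=(Id-A)^{-1}(Id-AL).
\]
Second, reading \eqref{D4} as $f^\alpha-f=A(f^\alpha-Lf)$ and using $\|A\|\le\|\alpha\|_\infty$ gives $\|f^\alpha-f\|_q\le\|\alpha\|_\infty(\|f^\alpha-f\|_q+\|f-Lf\|_q)$, hence the perturbation bound $\|f^\alpha-f\|_q\le\frac{\|\alpha\|_\infty}{1-\|\alpha\|_\infty}\|f-Lf\|_q$.

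\emph{Items (1)--(4).} Item (1) is exactly the perturbation bound, and when $\alpha=0$ we get $A=0$, so $\mathcal F_{\Delta,L}^\alpha=Id$. For (2), linearity is immediate since $A$ and $L$ do not depend on the germ while $f\mapsto(Id-A)^{-1}(Id-AL)f$ is linear; boundedness and the displayed norm estimate then follow from $\|f^\alpha\|_q\le\|f\|_q+\|f^\alpha-f\|_q$, item (1), and $\|f-Lf\|_q\le\|Id-L\|\,\|f\|_q$. For (3) and (4): each hypothesis forces $\|\alpha\|_\infty\|L\|<1$ (for (4) via $\|L\|\le 1+\|Id-L\|$), hence $\|AL\|<1$, so $Id-AL$ is invertible and therefore $\mathcal F_{\Delta,L}^\alpha$ is a topological automorphism; the factorization yields the lower estimate $\|\mathcal F_{\Delta,L}^\alpha f\|_q\ge\frac{1-\|\alpha\|_\infty\|L\|}{1+\|\alpha\|_\infty}\|f\|_q$ (so in particular (3): bounded below, hence injective) and $\|(\mathcal F_{\Delta,L}^\alpha)^{-1}\|=\|(Id-AL)^{-1}(Id-A)\|\le\frac{1+\|\alpha\|_\infty}{1-\|\alpha\|_\infty\|L\|}$ (item (4)). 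I would also point out that the exact threshold $\|\alpha\|_\infty<(1+\|Id-L\|)^{-1}$ in (4) is precisely the one that makes $\|\mathcal F_{\Delta,L}^\alpha-Id\|\le\frac{\|\alpha\|_\infty\|Id-L\|}{1-\|\alpha\|_\infty}<1$, so a Neumann-series argument on $Id+(\mathcal F_{\Delta,L}^\alpha-Id)$ is an equally good route to invertibility.

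\emph{Items (5)--(8).} For (5), if $Lf=f$ then by \eqref{D3} $T^\alpha(f)=f$, and uniqueness of the fixed point of $T^\alpha$ forces $f^\alpha=f$; conversely, putting $f^\alpha=f$ in \eqref{D4} gives $\alpha\cdot\bigl((f-Lf)\circ v_j^{-1}\bigr)=0$ on each $\prod_{k=1}^n I_{k,j_k}$, and since $\alpha$ does not vanish this forces $Lf=f$ $\mu_p$-a.e. Item (6) comes from the easy half of (5): an eigenvector $f\neq0$ of $L$ for the eigenvalue $1$ has $\mathcal F_{\Delta,L}^\alpha f=f$, whence $\|\mathcal F_{\Delta,L}^\alpha\|\ge1$. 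For (7), under $\|\alpha\|_\infty<\|L\|^{-1}$ the operator $\mathcal F_{\Delta,L}^\alpha$ is bounded below by (3); since a bounded-below operator on the infinite-dimensional space $\mathcal L^q(\mathcal I^n,\mu_p)$ cannot be compact (otherwise the identity on its closed range would be compact), $\mathcal F_{\Delta,L}^\alpha$ is not compact. Finally (8) is immediate from (4): under $\|\alpha\|_\infty<(1+\|Id-L\|)^{-1}$ the operator $\mathcal F_{\Delta,L}^\alpha$ is an automorphism, and an invertible bounded operator is Fredholm with trivial kernel and cokernel, hence of index zero.

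\emph{Main obstacle.} The delicate step is the operator reformulation itself: one must check that $A$ is genuinely well defined on $\mathcal L^q(\mathcal I^n,\mu_p)$ — the cells $\prod_{k=1}^n I_{k,j_k}$ meet only along a $\mu_p$-null grid — and that $\|A\|\le\|\alpha\|_\infty$, both of which rest on the measure-invariance identity from the proof of Theorem \ref{FIFinLp}. Two minor points also need a word: the converse in (5) really uses that $\alpha$ is non-vanishing (not merely $\|\alpha\|_\infty\neq0$), and (7) uses that $\mathcal L^q(\mathcal I^n,\mu_p)$ is infinite-dimensional, i.e. that $\mu_p$ is non-degenerate.
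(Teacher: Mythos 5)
Your proposal is correct, and for items (1)--(4) it takes a genuinely different route from the paper. The paper never isolates a composition operator: in each of items (1) and (3) it redoes from scratch the cell-by-cell integral computation with the invariance identity $\mu_p=\sum p_{j_1\cdots j_n}\,\mu_p\circ\Theta_{j_1\cdots j_n}^{-1}$ to obtain the scalar inequalities $\|f^\alpha-f\|_q\le\|\alpha\|_\infty\|f^\alpha-Lf\|_q$ and $\|f\|_q-\|f^\alpha\|_q\le\|\alpha\|_\infty(\|f^\alpha\|_q+\|L\|\|f\|_q)$, and then deduces (2) by the triangle inequality and (4) by a Neumann-series argument on $Id-\mathcal F_{\Delta,L}^\alpha$ (where, incidentally, the paper writes $\|Id-\mathcal F_{\Delta,L}^\alpha\|\le 1$ when the argument needs strict inequality, which your threshold remark supplies). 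You instead perform the measure-invariance computation once, to show $\|A\|\le\|\alpha\|_\infty$ for $(Ag)=\alpha\cdot(g\circ v^{-1})$, and read \eqref{D4} as the factorization $\mathcal F_{\Delta,L}^\alpha=(Id-A)^{-1}(Id-AL)$; this buys you linearity of the fractal operator for free (the paper asserts it without proof), all four norm estimates as one-line consequences of Neumann bounds, and in item (3) the strictly stronger conclusion that $\mathcal F_{\Delta,L}^\alpha$ is already surjective under $\|\alpha\|_\infty<\|L\|^{-1}$. Items (5)--(8) follow the paper's line; note that your observation about (5) is apt: the paper only proves that fixed points of $L$ are fixed by $\mathcal F_{\Delta,L}^\alpha$, and the converse inclusion genuinely requires $\alpha\neq 0$ $\mu_p$-a.e.\ rather than merely $\|\alpha\|_\infty\neq 0$, a hypothesis the statement does not make explicit.
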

\begin{proof}
\begin{enumerate}
\item By the use of self-referential equation, we can get
\begin{align*}
    &\lVert f^\alpha-f \rVert_{q}^{q} \\&=\int_{\mathcal I^{n}} \lvert f^\alpha(y_1,y_2,\cdots,y_n)-f(y_1,y_2,\cdots,y_n) \rvert^{q} d\mu_{p}(y_1,y_2,\cdots,y_n)\\
    &= \sum_{(j_1,\cdots,j_n) \in \prod_{i=1}^{n} \Sigma_{N_i}} \int_{\prod_{k=1}^{n} I_{k,j_k}}\lvert \alpha(y_1,y_2,\cdots,y_n) f^\alpha(v_{1,j_1}^{-1}(y_1),\cdots,v_{n,j_n}^{-1}(y_n))- \\& \hspace{+2cm}Lf(v_{1,j_1}^{-1}(y_1),\cdots,v_{n,j_n}^{-1}(y_n)) \rvert^q d\mu_{p}(y_1,y_2,\cdots,y_n)\\
    &=\sum_{(j_1,\cdots,j_n) \in \prod_{i=1}^{n} \Sigma_{N_i}} \int_{\prod_{k=1}^{n} I_{k,j_k}}\lvert \alpha(y_1,y_2,\cdots,y_n) (f^\alpha-Lf)\\&\hspace{2.5cm}(v_{1,j_1}^{-1}(y_1),\cdots,v_{n,j_n}^{-1}(y_n)) \rvert^q d\mu_{p} (y_1,y_2,\cdots,y_n)\\
   \end{align*} 
   \begin{align*}
    &\leq \sum_{(j_1,\cdots,j_n) \in \prod_{i=1}^{n} \Sigma_{N_i}} \lVert \alpha \rVert_{\infty}^{q}\int_{\prod_{k=1}^{n} I_{k,j_k}} \lvert (f^\alpha-Lf)(v_{1,j_1}^{-1}(y_1),\cdots,v_{n,j_n}^{-1}(y_n))\rvert^q \\& \hspace{+5cm}\cdot d\mu_{p}(y_1,y_2,\cdots,y_n)\\
    &= \sum_{(j_1,\cdots,j_n) \in \prod_{i=1}^{n} \Sigma_{N_i}} \lVert \alpha \rVert_{\infty}^{q}\int_{\prod_{k=1}^{n} I_{k,j_k}} \lvert (f^\alpha-Lf)(v_{1,j_1}^{-1}(y_1),\cdots,v_{n,j_n}^{-1}(y_n))\rvert^q \\& \cdot  d\left(\sum_{(i_1,\cdots,i_n)\in \prod_{k=1}^n\Sigma_{N_k}} p_{i_1\cdots i_n}\mu_{p} (v_{1,i_1}^{-1}(y_1),\cdots,v_{n,i_n}^{-1}(y_n))\right) \\
    &= \sum_{(j_1,\cdots,j_n) \in \prod_{i=1}^{n} \Sigma_{N_i}} \lVert \alpha \rVert_{\infty}^{q} \int_{\prod_{k=1}^{n} I_{k,j_k}}\lvert (f^\alpha-Lf)(v_{1,j_1}^{-1}(y_1),\cdots,v_{n,j_n}^{-1}(y_n))\rvert^q\\& \hspace{+5cm} p_{j_1\cdots j_n} d(\mu_{p} (v_{1,j_1}^{-1}(y_1),\cdots,v_{n,j_n}^{-1}(y_n)))\\
    &= \sum_{(j_1,\cdots,j_n) \in \prod_{i=1}^{n} \Sigma_{N_i}} \lVert \alpha \rVert_{\infty}^{q} p_{j_1\cdots j_n} \int_{\mathcal{I}^n} \lvert (f^\alpha-Lf)(y_1,y_2,\cdots,y_n) \rvert^q d\mu_{p} (y_1,y_2,\cdots,y_n)\\
    &=\sum_{(j_1,\cdots,j_n) \in \prod_{i=1}^{n} \Sigma_{N_i}} \lVert \alpha \rVert_{\infty}^{q} p_{j_1 j_2\cdots j_n} \lVert f^\alpha-Lf \rVert_{q}^{q}.
    \end{align*}
    From here ,we get
    \begin{equation}\label{eqn31}
    \lVert f^\alpha-f \rVert_{q}\leq \lVert \alpha \rVert_{\infty} \lVert f^\alpha-Lf \rVert_{q},
    \end{equation}
    By triangle inequality, we have
    $$\lVert f^\alpha-f \rVert_{q}\leq \lVert \alpha \rVert_{\infty} \lVert f^\alpha-f \rVert_{q} + \lVert \alpha \rVert_{\infty} \lVert f -Lf \rVert_{q},$$
    Hence, we get the desired inequality
    \begin{equation}\label{eqn312}
\left\|f^\alpha-f\right\|_{q} \leq \frac{\|\alpha\|_{\infty}}{1-\|\alpha\|_{\infty}}\|f-L f\|_{q} .
\end{equation}

\item By using inequality (\ref{eqn312}), we get

\label{eqn32}
\begin{align*}
\left\|f^\alpha\right\|_{q}- \left\| f \right\|_{q}& \leq \left\|f^\alpha-f\right\|_{q}\\  &\leq \Bigg[\frac{\|\alpha\|_{\infty}\|I d-L\|}{1-\|\alpha\|_{\infty}} \Bigg]\left\| f \right\|_{q} 
\end{align*}
Hence, we get the desired inequality $$ \left\|\mathcal{F}_{\Delta, L}^\alpha \right\| \leq  1 + \frac{\|\alpha\|_{\infty}\|I d-L\|}{1-\|\alpha\|_{\infty}}   $$

\item 
\begin{align*}
\left\|f\right\|_{q}- \left\| f^\alpha \right\|_{q}& \leq \left\|f^\alpha-f\right\|_{q} \\& \leq \|\alpha\|_{\infty} \left\|f^\alpha-Lf\right\|_{q}
\\& \leq \|\alpha\|_{\infty} \big (\left\| f^\alpha \right\| _{q} + \|L\| \left\|f\right\|_{q}  \big)
\end{align*}

This gives
$$ \big(1- \|\alpha\|_{\infty}\|L\| \big)\|f\|_{q} \leq \big(1+\|\alpha\|_{\infty} \big)\| f^\alpha \|_{q}$$
By assuming $\|\alpha\|_{\infty} < \|L\|^{-1}$, we get
\begin{equation}\label{eqn33}
 \|f\|_{q}  \leq \frac{1+\|\alpha\|_{\infty}}{1-\|\alpha\|_{\infty}\|L\|}\|f^\alpha\|_{q}  
\end{equation}
Hence, we get the required result.
\item  By the use of inequality (\ref{eqn32}), we get $$ \left\| Id-\mathcal{F}_{\Delta, L}^\alpha \right\| \leq   \frac{\|\alpha\|_{\infty}\|I d-L\|}{1-\|\alpha\|_{\infty}}   $$
Now, by using the given condition, we get $\left\| Id-\mathcal{F}_{\Delta, L}^\alpha \right\| \leq 1$, therefore $(Id-\mathcal{F}_{\Delta, L}^\alpha )^{-1}$ exists, and this inverse is bounded. By using inequality (\ref{eqn33}), we can get $$\left\| (\mathcal{F}_{\Delta, L}^\alpha)^{-1}(f)\right\|_{q}  \leq \frac{1+\|\alpha\|_{\infty}}{1-\|\alpha\|_{\infty}\|L\|}\|f^\alpha\|_{q} $$
This gives the required result.
\item Consider a fixed point of $L$ namely $f$ and  $\|\alpha\|_{\infty} \neq 0.$ By using the condition on 
 $\alpha$ and the inequality (\ref{eqn31}), we get $ \left\|f^\alpha-f\right\|_{q}=0.$ Therefore, by using the property of norm, we get $f^\alpha=f$ almost everywhere. This gives that $f^\alpha=f$ in the given space $\mathcal{L}^q(\mathcal{I}^n, \mu_p).$
\item Consider an element $f$ of the space  $\mathcal{L}^q(\mathcal{I}^n, \mu_p)$ with the imposing condition $L(f)=f$ and $\|f\|_{q}=1.$ By using above part, one can easily deduce that $\| \mathcal{F}_{\Delta, L}^\alpha(f)\|_{q}=\|f\|_{q}. $ Also directly from the definition of operator norm, one can get $1 \leq \| \mathcal{F}_{\Delta, L}^\alpha\|.$ 
\item We already proved in part $3$, that $\mathcal{F}_{\Delta, L}^\alpha$ is one to one. Since the range space of $\mathcal{F}_{\Delta, L}^\alpha$ has dimension infinite. Next, we have to define an inverse map $(\mathcal{F}_{\Delta, L}^\alpha)^{-1}: \mathcal{F}_{\Delta, L}^\alpha(\mathcal{L}^q(\mathcal{I}^n, \mu_p) )\rightarrow \mathcal{L}^q(\mathcal{I}^n, \mu_p).$ By using the condition $\|\alpha\|_{\infty}<1$, we get $\mathcal{F}_{\Delta, L}^\alpha$ is bounded below. From here it can be directly seen that $(\mathcal{F}_{\Delta, L}^\alpha)^{-1}$ is a bounded linear operator. Let us assume that $\mathcal{F}_{\Delta, L}^\alpha$ is a compact operator. Then $\mathcal{F}_{\Delta, L}^\alpha(\mathcal{F}_{\Delta, L}^\alpha)^{-1}$ becomes a compact operator. Since range space of $\mathcal{F}_{\Delta, L}^\alpha$ has infinite dimension, which is a contradiction to the previous statement. Hence the proof.
\item By using the given assumption, $(\mathcal{F}_{\Delta, L}^\alpha)^{*}$ is invertible. Therefore, $(\mathcal{F}_{\Delta, L}^\alpha)$ is Fredholm. The index of a Fredholm operator is determined by $$ index (\mathcal{F}_{\Delta, L}^\alpha)=\dim (kernel(\mathcal{F}_{\Delta, L}^\alpha))-\dim (kernel(\mathcal{F}_{\Delta, L}^\alpha)^{*}).$$ Hence the index for a Fredholm operator is zero.
\end{enumerate}
\end{proof}

\begin{theorem} Consider a set  $\mathcal{B}=\{ \alpha \in L^{\infty}(\mathcal{I}^n,\mu_p): \lVert \alpha \rVert_{\infty} < 1\}$. Then  \( \mathcal{T}:\mathcal{B} \to\mathbb{R} \) defined by $\mathcal{T}(\alpha) = \| T^{\alpha}(0) \|_{q}$ is convex, where $T^{\alpha}$ is a RB operator mentioned in equation (\ref{D3}).
\end{theorem}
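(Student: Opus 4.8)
The plan is to show directly that $\mathcal{T}$ satisfies the convexity inequality $\mathcal{T}(\lambda\alpha_1 + (1-\lambda)\alpha_2) \leq \lambda\mathcal{T}(\alpha_1) + (1-\lambda)\mathcal{T}(\alpha_2)$ for $\alpha_1,\alpha_2 \in \mathcal{B}$ and $\lambda \in [0,1]$. First I would record that $\mathcal{B}$ is convex: if $\|\alpha_1\|_\infty < 1$ and $\|\alpha_2\|_\infty < 1$, then $\|\lambda\alpha_1 + (1-\lambda)\alpha_2\|_\infty \leq \lambda\|\alpha_1\|_\infty + (1-\lambda)\|\alpha_2\|_\infty < 1$, so $\lambda\alpha_1 + (1-\lambda)\alpha_2 \in \mathcal{B}$ and $\mathcal{T}$ is defined there.

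Next I would compute $T^\alpha(0)$ explicitly from \eqref{D3}. Setting $g = 0$ in the RB operator gives, for $(y_1,\ldots,y_n) \in \prod_{k=1}^n I_{k,j_k}$,
\begin{equation*}
T^\alpha(0)(y_1,\ldots,y_n) = f(y_1,\ldots,y_n) - \alpha(y_1,\ldots,y_n)\,(Lf)(v_{1,j_1}^{-1}(y_1),\ldots,v_{n,j_n}^{-1}(y_n)).
\end{equation*}
The crucial observation is that $T^\alpha(0)$ is an \emph{affine} function of $\alpha$: only the second term depends on $\alpha$, and it does so linearly (pointwise multiplication by the fixed function $(Lf)\circ(v_{1,j_1}^{-1},\ldots,v_{n,j_n}^{-1})$). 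Concretely, writing $\alpha_\lambda := \lambda\alpha_1 + (1-\lambda)\alpha_2$, one checks pointwise on each block $\prod_k I_{k,j_k}$ that
\begin{equation*}
T^{\alpha_\lambda}(0) = \lambda\, T^{\alpha_1}(0) + (1-\lambda)\, T^{\alpha_2}(0),
\end{equation*}
since the $f$-term equals $\lambda f + (1-\lambda) f$ and the $\alpha$-term splits by linearity.

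Finally I would invoke the triangle inequality and positive homogeneity of $\|\cdot\|_q$ on $\mathcal{L}^q(\mathcal{I}^n,\mu_p)$:
\begin{equation*}
\mathcal{T}(\alpha_\lambda) = \|T^{\alpha_\lambda}(0)\|_q = \|\lambda\, T^{\alpha_1}(0) + (1-\lambda)\, T^{\alpha_2}(0)\|_q \leq \lambda\|T^{\alpha_1}(0)\|_q + (1-\lambda)\|T^{\alpha_2}(0)\|_q = \lambda\mathcal{T}(\alpha_1) + (1-\lambda)\mathcal{T}(\alpha_2),
\end{equation*}
which is precisely convexity. There is no real obstacle here; the only point requiring a little care is verifying the affine identity $T^{\alpha_\lambda}(0) = \lambda T^{\alpha_1}(0) + (1-\lambda) T^{\alpha_2}(0)$ holds as an identity in $\mathcal{L}^q$ (i.e., $\mu_p$-almost everywhere, block by block over the pieces $\prod_k I_{k,j_k}$), and noting that $T^\alpha(0)$ indeed lies in $\mathcal{L}^q(\mathcal{I}^n,\mu_p)$ so that all norms are finite — this follows from the boundedness of $L$, the hypothesis $f \in \mathcal{L}^q$, and the measure-invariance computation already used in the proof of Theorem \ref{FIFinLp}.
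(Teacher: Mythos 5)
Your proposal is correct and is essentially the paper's own argument: the paper also writes $T^{\alpha}(0)=f-t_{\alpha}$ with $t_{\alpha}$ depending linearly on $\alpha$ (pointwise multiplication by $(Lf)$ composed with the maps $v_{k,j_k}^{-1}$), and then concludes by the triangle inequality and positive homogeneity of $\lVert\cdot\rVert_q$. Your additional remarks on the convexity of $\mathcal{B}$ and on $T^{\alpha}(0)\in\mathcal{L}^q(\mathcal{I}^n,\mu_p)$ are details the paper leaves implicit, but the route is the same.
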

\begin{proof}
Define  \( t_{\alpha}:\mathcal{I}^n \to\mathbb{R} \) as follows $$t_{\alpha}(y_1,y_2,\cdots,y_n)=\alpha(y_1,y_2,\cdots,y_n)Lf(v_{1,j_1}(y_1),\cdots,v_{n,j_n}(y_n))$$
for $(y_1,y_2,\cdots,y_n)\in \prod_{k=1}^{n} I_{k,j_k}$ and $(j_1,j_2,\cdots,j_n)\in \prod_{i=1}^{n}\Sigma_{N_i} .$ Let $0<c<1$ and $\alpha_{1},\alpha_{2}$ in $\mathcal{B}.$
\begin{align*}
\mathcal{T}(c\alpha_{1}+(1-c)\alpha_{2})&= \| T^{c\alpha_{1}+(1-c)\alpha_{2}}(0) \|_{q}
\\&= \|f-t_{c\alpha_{1}+(1-c)\alpha_{2}} \|_{q}
\\&= \|f-(ct_{\alpha_{1}}+ (1-c)t_{\alpha_{2}}\|_{q}
\\&\leq c\|f-t_{\alpha_{1}}\|_{q}+ (1-c)\|f-t_{\alpha_{2}}\|_{q}
\\&= c \mathcal{T}(\alpha_{1})+(1-c)\mathcal{T}(\alpha_{2})
\end{align*}
Therefore $\mathcal{T}$ is a convex map.

\end{proof}
The constraint of linearity for the operator $L$ can be dropped in the next statement since we treat $L$ as an affine map.
\begin{theorem} Consider a fractal operator $\mathcal{F}_{\Delta, L}^\alpha: \mathcal{L}^q(\mathcal{I}^n , \mu_p) \rightarrow \mathcal{L}^q(\mathcal{I}^n , \mu_p)$ . Prove that it is relatively Lipschitz  with respect to the base operator $L.$
\end{theorem}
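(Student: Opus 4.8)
The plan is to exploit that, although the base operator $L$ is now merely affine, the self-referential equation \eqref{D4} involves the germ through the pair $(f,Lf)$ in an affine way, so that \emph{differences} of fractal functions still satisfy a clean self-referential relation in which the germ-independent part of $L$ disappears. Write $Lf = L_0 f + b$, where $L_0$ is the bounded homogeneous part of $L$ and $b\in\mathcal{L}^q(\mathcal{I}^n,\mu_p)$ is a fixed function; since $\|\alpha\|_\infty<1$, the RB operator $T^\alpha$ of \eqref{D3} is still a contraction (its contractivity in Theorem \ref{FIFinLp} used only $\|\alpha\|_\infty<1$ and $\sum p_{j_1\cdots j_n}=1$), so $\mathcal{F}_{\Delta,L}^\alpha$ remains well defined and, being generated by an affine fixed-point equation, is itself an affine self-map of $\mathcal{L}^q(\mathcal{I}^n,\mu_p)$. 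The target estimate is then the relative Lipschitz bound
\[
\|\mathcal{F}_{\Delta,L}^\alpha(f)-\mathcal{F}_{\Delta,L}^\alpha(g)\|_q \le \|f-g\|_q + \frac{\|\alpha\|_\infty}{1-\|\alpha\|_\infty}\,\|(f-Lf)-(g-Lg)\|_q ,
\]
the affine counterpart of the perturbation estimate in part (1) of Theorem \ref{thm3}, which exhibits the Lipschitz behaviour of $\mathcal{F}_{\Delta,L}^\alpha$ as being governed entirely by the deviation of $L$ from $Id$.

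For the core computation I would fix $f,g\in\mathcal{L}^q(\mathcal{I}^n,\mu_p)$ and put $u=f^\alpha-g^\alpha$. Subtracting the two instances of \eqref{D4}, the term $f(y_1,\dots,y_n)$ and the $b$-part of $Lf$ cancel against their $g$-counterparts (since $Lf-Lg=L_0(f-g)$), giving, for $(y_1,\dots,y_n)\in\prod_{k=1}^n I_{k,j_k}$,
\[
u(y_1,\dots,y_n) = (f-g)(y_1,\dots,y_n) + \alpha(y_1,\dots,y_n)\bigl(u - L_0(f-g)\bigr)(v_{1,j_1}^{-1}(y_1),\dots,v_{n,j_n}^{-1}(y_n)).
\]
Setting $w = u-(f-g) = \mathcal{F}_{\Delta,L}^\alpha(f)-\mathcal{F}_{\Delta,L}^\alpha(g)-(f-g)$ and using $(f-g)-L_0(f-g)=(f-Lf)-(g-Lg)$, this turns into the fixed-point-type identity
\[
w(y_1,\dots,y_n) = \alpha(y_1,\dots,y_n)\Bigl(w + \bigl[(f-Lf)-(g-Lg)\bigr]\Bigr)(v_{1,j_1}^{-1}(y_1),\dots,v_{n,j_n}^{-1}(y_n))
\]
on each block $\prod_{k=1}^n I_{k,j_k}$. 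Raising to the $q$-th power, integrating, and running the change-of-variables chain exactly as in the proof of part (1) of Theorem \ref{thm3} — decompose $\mathcal{I}^n$ into the blocks, pull out $\|\alpha\|_\infty^q$, expand $d\mu_p$ through its self-referential formula keeping only the surviving $i=j$ term, change variables by $\Theta_{j_1\cdots j_n}$, and use $\sum_{(j_1,\dots,j_n)} p_{j_1\cdots j_n}=1$ — would yield
\[
\|w\|_q^q \le \|\alpha\|_\infty^q\,\bigl\|w + \bigl[(f-Lf)-(g-Lg)\bigr]\bigr\|_q^q .
\]

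Taking $q$-th roots and applying the triangle inequality gives $\|w\|_q \le \frac{\|\alpha\|_\infty}{1-\|\alpha\|_\infty}\,\|(f-Lf)-(g-Lg)\|_q$, and one more triangle inequality, $\|\mathcal{F}_{\Delta,L}^\alpha(f)-\mathcal{F}_{\Delta,L}^\alpha(g)\|_q \le \|f-g\|_q + \|w\|_q$, delivers the stated relative Lipschitz bound; estimating $\|(f-Lf)-(g-Lg)\|_q \le (1+\|Id-L_0\|)\|f-g\|_q$ then recovers the global Lipschitz constant $1+\frac{\|\alpha\|_\infty\|Id-L\|}{1-\|\alpha\|_\infty}$, matching part (2) of Theorem \ref{thm3} in the linear case. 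The one step I expect to require genuine care is the measure-theoretic bookkeeping when restricting $\mu_p$ to the blocks $\prod_{k=1}^n I_{k,j_k}$: one must verify (as is tacitly assumed in the earlier proofs) that two distinct blocks meet in a set of $\mu_p$-measure zero, so that the cross terms $i\ne j$ in the self-similar expansion of $\mu_p$ do not contribute and the change of variables produces exactly the weight $p_{j_1\cdots j_n}$. Everything else is a direct repetition of the estimates already carried out for Theorems \ref{FIFinLp} and \ref{thm3}.
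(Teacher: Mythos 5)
Your proposal is correct, and it reaches a bound that is in fact slightly sharper than the one in the paper, but by a genuinely different route. The paper compares the two RB operators at a common argument, establishing $\lVert T_{f_{1}}^{\alpha}(g)-T_{f_{2}}^{\alpha}(g)\rVert_{q}\leq\lVert f_{1}-f_{2}\rVert_{q}+\lVert\alpha\rVert_{\infty}\lVert Lf_{1}-Lf_{2}\rVert_{q}$ via the same change-of-variables chain you invoke, and then runs the standard ``fixed points of uniformly close contractions are close'' argument (triangle inequality through $T_{f_{2}}^{\alpha}(f_{1}^{\alpha})$, absorb the $\lVert\alpha\rVert_{\infty}\lVert f_{1}^{\alpha}-f_{2}^{\alpha}\rVert_{q}$ term) to obtain $\lVert f_{1}^{\alpha}-f_{2}^{\alpha}\rVert_{q}\leq\frac{1}{1-\lVert\alpha\rVert_{\infty}}\lVert f_{1}-f_{2}\rVert_{q}+\frac{\lVert\alpha\rVert_{\infty}}{1-\lVert\alpha\rVert_{\infty}}\lVert Lf_{1}-Lf_{2}\rVert_{q}$. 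You instead subtract the two self-referential equations \eqref{D4} directly and isolate $w=(f^{\alpha}-g^{\alpha})-(f-g)$, which satisfies its own self-referential identity; this yields $\lVert f^{\alpha}-g^{\alpha}\rVert_{q}\leq\lVert f-g\rVert_{q}+\frac{\lVert\alpha\rVert_{\infty}}{1-\lVert\alpha\rVert_{\infty}}\lVert(f-Lf)-(g-Lg)\rVert_{q}$, the exact two-function analogue of part (1) of Theorem \ref{thm3}, and the paper's estimate follows from yours by one more triangle inequality. Your approach buys a tighter constant on the $\lVert f-g\rVert_{q}$ term and makes the role of $Id-L$ explicit; the paper's is more modular, since it never needs to manipulate the self-referential equation of the difference. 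One small remark: your decomposition $Lf=L_{0}f+b$ is an unnecessary detour, because the identity $(f-g)-(Lf-Lg)=(f-Lf)-(g-Lg)$ you actually use is pure algebra and holds for an arbitrary map $L$, so your argument (like the paper's) does not really need affineness. Both proofs rest on the same tacit assumption you flag, namely that distinct blocks $\prod_{k=1}^{n}I_{k,j_{k}}$ overlap in a set of $\mu_{p}$-measure zero so that only the diagonal term survives in the self-similar expansion of $\mu_{p}$; the paper does not verify this either.
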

\begin{proof}
Consider $ f_1, f_2 \in \mathcal{L}^q(\mathcal{I}^n, \mu_p) $ and corresponding to these $ f_{1}^{\alpha} $ and $ f_{2}^{\alpha} $ become the fixed points of the RB operator $T_{f_{1}}^{\alpha}$ and $T_{f_{2}}^{\alpha}$ respectively. The RB operators $T_{f_{1}}^{\alpha}$ and $T_{f_{2}}^{\alpha}$ can be defined as

\begin{equation*}
    \begin{aligned}
    T_{f_{1}}^{\alpha}(g)(y_1,y_2,\cdots,y_n)=f_{1}(y_1,y_2,\cdots,y_n)+\alpha(y_1,y_2,\cdots,y_n)\times\\(g(v_{1,j_1}^{-1}(y_1),\cdots,v_{n,j_n}^{-1}(y_n))-Lf_{1}(v_{1,j_1}^{-1}(y_1),\cdots,v_{n,j_n}^{-1}(y_n))),\\
    T_{f_{2}}^{\alpha}(g)(y_1,y_2,\cdots,y_n)=f_{2}(y_1,y_2,\cdots,y_n)+\alpha(y_1,y_2,\cdots,y_n)\times\\(g(v_{1,j_1}^{-1}(y_1),\cdots,v_{n,j_n}^{-1}(y_n))-Lf_{2}(v_{1,j_1}^{-1}(y_1),\cdots,v_{n,j_n}^{-1}(y_n))),
    \end{aligned}
\end{equation*}
for $(y_1,y_2,\cdots,y_n)\in \prod_{k=1}^n I_{k,j_k}$ and $(j_1,j_2,\cdots,j_n)\in \prod_{i=1}^n \Sigma_{N_i}.$ 
\begin{align*}
    &\lVert T_{f_{1}}^{\alpha}(g)-T_{f_{2}}^{\alpha}(g) \rVert_{q} \\&\leq \left\|f_{1}-f_{2}\right\|_{q}+\Bigg( \int_{\mathcal{I}^n} \lvert \alpha(y_1,y_2,\cdots,y_n)(L(f_1)-L(f_2))(v_{1,j_1}^{-1}(y_1),\cdots,v_{n,j_n}^{-1}(y_n))\rvert^{q} \\& \hspace{9cm}\cdot d\mu_{p}(y_1,y_2,\cdots,y_n)\Bigg)^{\frac{1}{q}}\\
    &= \left\|f_{1}-f_{2}\right\|_{q}+ \Bigg( \sum_{(j_1,j_2,\cdots,j_n)\in \prod_{i=1}^n \Sigma_{N_i}} \int_{\prod_{k=1}^{n}I_{k,j_k}}\lvert \alpha(y_1,y_2,\cdots,y_n) \\& \hspace{2.8cm}\cdot(L(f_1)-L(f_2))(v_{1,j_1}^{-1}(y_1),\cdots,v_{n,j_n}^{-1}(y_n)) \rvert^q d\mu_{p} (y_1,y_2,\cdots,y_n)\Bigg)^{\frac{1}{q}}\\
    &\leq \left\|f_{1}-f_{2}\right\|_{q} + \Bigg(\sum_{(j_1,j_2,\cdots,j_n)\in \prod_{i=1}^n \Sigma_{N_i}} \lVert \alpha \rVert_{\infty}^{q}\int_{\prod_{k=1}^{n}I_{k,j_k}} \lvert (L(f_1)-L(f_2)) \\& (v_{1,j_1}^{-1}(y_1),\cdots,v_{n,j_n}^{-1}(y_n))\rvert^q d\mu_{p}(y_1,y_2,\cdots,y_n)\Bigg)^{\frac{1}{q}}\\
      &=  \left\|f_{1}-f_{2}\right\|_{q}+ \Bigg(\sum_{(j_1,j_2,\cdots,j_n)\in \prod_{i=1}^n \Sigma_{N_i}} \lVert \alpha \rVert_{\infty}^{q}\int_{\prod_{k=1}^{n}I_{k,j_k}} \lvert (L(f_1)-L(f_2)) \\& (v_{1,j_1}^{-1}(y_1),\cdots,v_{n,j_n}^{-1}(y_n))\rvert^q \\& \cdot  d\Bigg(\sum_{(i_1,i_2,\cdots,i_n)\in \prod_{k=1}^n \Sigma_{N_k}} p_{i_1 i_2\cdots i_n}\mu_{p} (v_{1,i_1}^{-1}(y_1),\cdots,v_{n,i_n}^{-1}(y_n)) \Bigg)\Bigg)^{\frac{1}{q}}  \\
    &=\left\|f_{1}-f_{2}\right\|_{q}+ \bigg(\sum_{(j_1,j_2,\cdots,j_n)\in \prod_{i=1}^n \Sigma_{N_i}} \lVert \alpha \rVert_{\infty}^{q} \int_{\prod_{k=1}^{n}I_{k,j_k}}\lvert (L(f_1)-L(f_2))\\&(v_{1,j_1}^{-1}(y_1),\cdots,v_{n,j_n}^{-1}(y_n))\rvert^q p_{j_1\cdots j_n} d(\mu_{p} (u_{i}^{-1}(x),v_{j}^{-1}(y))\Bigg)^{\frac{1}{q}} \\
    &= \left\|f_{1}-f_{2}\right\|_{q}+ \left( \sum_{(j_1,j_2,\cdots,j_n)\in \prod_{i=1}^n \Sigma_{N_i}} \lVert \alpha \rVert_{\infty}^{q} p_{j_1\cdots j_n} \int_{\mathcal{I}^n} \lvert (L(f_1)-L(f_2))(x,y) \rvert^q d\mu_{p} (x,y)\right)^{\frac{1}{q}}\\
    &=  \left\|f_{1}-f_{2}\right\|_{q}+ \left( \sum_{(j_1,j_2,\cdots,j_n)\in \prod_{i=1}^n \Sigma_{N_i}} \lVert \alpha \rVert_{\infty}^{q} p_{j_1\cdots j_n} \lVert L(f_1)-L(f_2) \rVert_{q}^{q}\right)^{\frac{1}{q}}\\
    &= \left\|f_{1}-f_{2}\right\|_{q}+ \lVert \alpha \rVert_{\infty}\lVert L(f_1)-L(f_2) \rVert_{q}.
\end{align*}
From here, we get
\begin{align*}
    \lVert \mathcal{F}_{\Delta, L}^\alpha (f_{1})-\mathcal{F}_{\Delta, L}^\alpha (f_{2}) \rVert_{q} 
    &= \lVert f_{1}^{\alpha}-f_{2}^{\alpha}\rVert_{q} \\
    &=\lVert T_{f_{1}}^{\alpha}(f_{1}^{\alpha})-T_{f_{2}}^{\alpha}(f_{2}^{\alpha}) \rVert_{q} \\
    &\leq \lVert T_{f_{1}}^{\alpha}(f_{1}^{\alpha})-T_{f_{2}}^{\alpha}(f_{1}^{\alpha}) \rVert_{q}  +\lVert T_{f_{2}}^{\alpha}(f_{1}^{\alpha})-T_{f_{2}}^{\alpha}(f_{2}^{\alpha}) \rVert_{q} \\
    \end{align*}
  \begin{align*}
    &\leq \left\|f_{1}-f_{2}\right\|_{q}+ \lVert \alpha \rVert_{\infty}\lVert L(f_1)-L(f_2) \rVert_{q} + \lVert \alpha \rVert_{\infty}\lVert  f_{1}^{\alpha}-f_{2}^{\alpha} \rVert_{q}
\end{align*}
This produces
$$\lVert \mathcal{F}_{\Delta, L}^\alpha (f_{1})-\mathcal{F}_{\Delta, L}^\alpha (f_{2}) \rVert_{q} \leq  \frac{1}{1-\|\alpha\|_{\infty}}\lVert f_1-f_2 \rVert_{q} +\frac{\|\alpha\|_{\infty}}{1-\|\alpha\|_{\infty}}\lVert L(f_1)-L(f_2) \rVert_{q} $$
Hence the proof.
\end{proof}

Next, we explore fractal interpolation by using a specific germ function and a base function. Then, we generate fractal images at different scaling factors where varying demonstrates how the fractal structure changes with the scaling factor.\\
 Consider a domain $[-1,1] \times [-1,1]$ of rectangular type and  $\{-1,-0.5,~0.0,~0.5,~1\}$ is a partition corresponding to $[-1,1]$. Now, we have to consider a germ function in the specific domain as follows: 
$$f(x,y)=(x^2+y^2)\sin \left( \frac{1}{\sqrt{1 + x^2 + y^2}} \right).$$
\newline

Choose the base function as $$ L(x,y)=(2-x^2)y^2f(x,y), $$
for all $(x,y) \in [-1,1] \times [-1,1].$
Then for various scaling factors $\alpha=0.3,~0.5,~ 0.7,~0.9$ we obtain the figures $1,~2,~3,~4$, respectively.\\
\begin{figure}[!h]
\begin{minipage}[t]{6.5cm}
\includegraphics[scale=0.4]{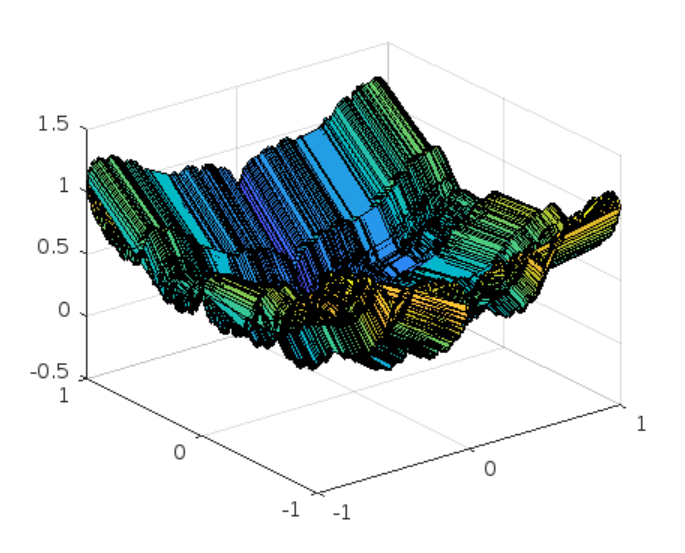}
\caption{FIS($\alpha=0.3$)}
\end{minipage}
\hspace{1.5cm}
\begin{minipage}[t]{6.5cm}
\includegraphics[scale=0.4]{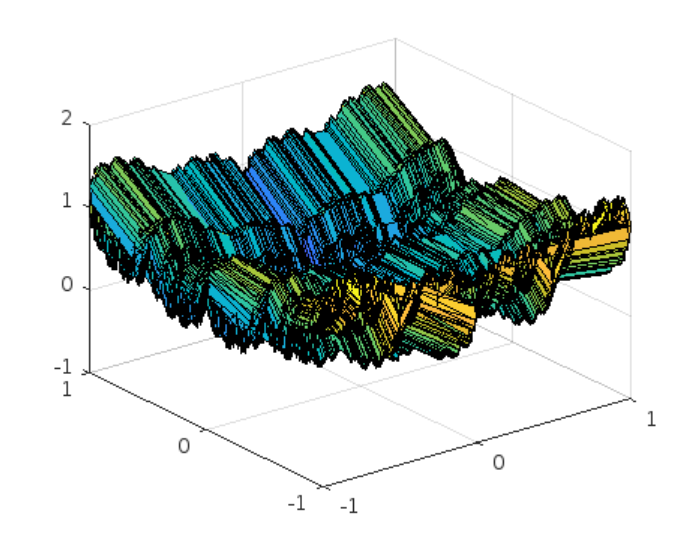}
\caption{FIS($\alpha=0.5$)}
\end{minipage}
\end{figure}
\begin{figure}[!h]
\begin{minipage}[t]{6.5cm}
\includegraphics[scale=0.4]{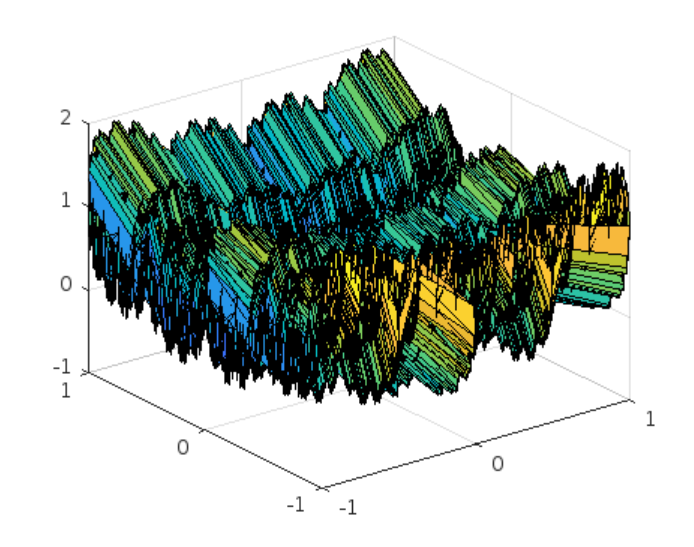}
\caption{FIS($\alpha=0.7$)}
\end{minipage}
\hspace{1.5cm}
\begin{minipage}[t]{6.5cm}
\includegraphics[scale=0.4]{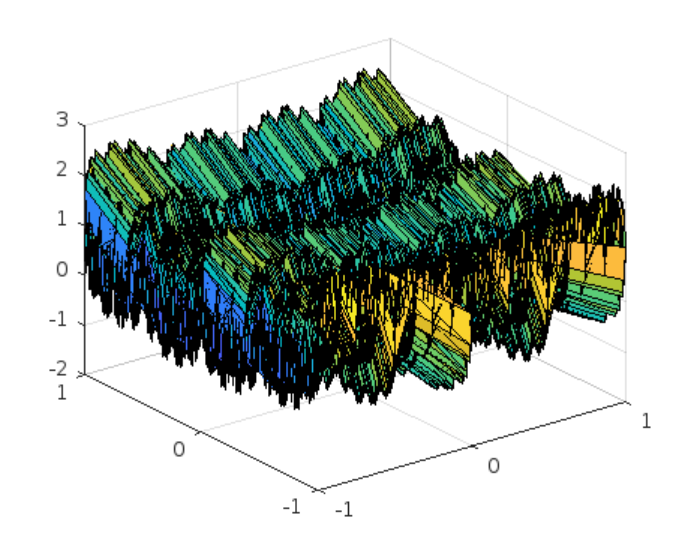}
\caption{FIS($\alpha =0.9$)}
\end{minipage}
\end{figure}
\newline

A Schauder basis is a mathematical notion used in the field of functional analysis. This idea is essential to the study of infinite-dimensional spaces and can be used in many branches of applied sciences and mathematics. Schonefeld \cite{SQ1} introduced the Schauder basis concept. The necessity to extend the notion of a basis from finite-dimensional vector spaces to infinite-dimensional Banach spaces has ultimately led to the development of Schauder bases. He proved that some special functions ${h_n}$ form a basis for the space of one time continuously differentiable on $[0~ 1] \times [0~ 1] $. Later in 1971,\cite{SQ2} generalized this concept of Schauder bases for the space $C^k(T^q)$. This motivates us to discuss the Schauder basis for the space $\mathcal{L}^q(\mathcal{I}^n, \mu_p).$ \\ We address whether the corresponding fractal functions for the space $\mathcal{L}^q(\mathcal{I}^n, \mu_p)$ have a Schauder basis in the next theorem.
\begin{theorem} In the space $\mathcal{L}^q(\mathcal{I}^n, \mu_p)$ ,there exists a Schauder basis of the fractal functions corresponding to a given Schauder basis.
\end{theorem}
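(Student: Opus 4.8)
The plan is to realise the claimed basis as the image, under the fractal operator $\mathcal{F}_{\Delta, L}^\alpha$, of an arbitrary Schauder basis of $\mathcal{L}^q(\mathcal{I}^n,\mu_p)$, and then to invoke the elementary functional-analytic fact that a topological automorphism of a Banach space carries Schauder bases to Schauder bases. Since $\mathcal{I}^n$ is compact and $\mu_p$ is a finite Borel measure, the space $\mathcal{L}^q(\mathcal{I}^n,\mu_p)$ is separable for $1\le q<\infty$ and hence admits a Schauder basis; fix one and call it $(u_m)_{m\in\mathbb{N}}$. The fractal functions corresponding to this basis are then $u_m^\alpha:=\mathcal{F}_{\Delta, L}^\alpha(u_m)$, and the assertion is precisely that $(u_m^\alpha)_{m\in\mathbb{N}}$ is again a Schauder basis of $\mathcal{L}^q(\mathcal{I}^n,\mu_p)$.

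First I would impose on the scaling function the smallness condition $\|\alpha\|_{\infty}<(1+\|Id-L\|)^{-1}$, so that part~(4) of Theorem~\ref{thm3} applies and $\Phi:=\mathcal{F}_{\Delta, L}^\alpha$ is a bounded linear bijection of $\mathcal{L}^q(\mathcal{I}^n,\mu_p)$ onto itself with bounded inverse, i.e. a \emph{topological automorphism}; both $\|\Phi\|$ and $\|\Phi^{-1}\|\le (1+\|\alpha\|_{\infty})/(1-\|\alpha\|_{\infty}\|L\|)$ are finite by that theorem.

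Next I would verify the two defining properties of a Schauder basis for $(u_m^\alpha)$. For the expansion property, let $g\in\mathcal{L}^q(\mathcal{I}^n,\mu_p)$ be arbitrary; applying the basis property of $(u_m)$ to $\Phi^{-1}g$ yields a unique scalar sequence $(\beta_m)$ with $\big\|\Phi^{-1}g-\sum_{m=1}^{M}\beta_m u_m\big\|_q\to 0$ as $M\to\infty$. Since $\Phi$ is bounded and linear,
$$\Big\|g-\sum_{m=1}^{M}\beta_m u_m^\alpha\Big\|_q=\Big\|\Phi\Big(\Phi^{-1}g-\sum_{m=1}^{M}\beta_m u_m\Big)\Big\|_q\le\|\Phi\|\,\Big\|\Phi^{-1}g-\sum_{m=1}^{M}\beta_m u_m\Big\|_q\longrightarrow 0,$$
so $g=\sum_{m=1}^{\infty}\beta_m u_m^\alpha$ in $\mathcal{L}^q(\mathcal{I}^n,\mu_p)$. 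For uniqueness of the coefficients, suppose also $\sum_{m=1}^{M}\gamma_m u_m^\alpha\to g$; applying the bounded operator $\Phi^{-1}$ gives $\sum_{m=1}^{M}\gamma_m u_m\to\Phi^{-1}g$, and the uniqueness of the expansion of $\Phi^{-1}g$ relative to the basis $(u_m)$ forces $\gamma_m=\beta_m$ for every $m$. Hence $(u_m^\alpha)_{m\in\mathbb{N}}$ is a Schauder basis of $\mathcal{L}^q(\mathcal{I}^n,\mu_p)$ consisting of fractal functions, which is the claim.

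The one point that must not be overlooked is that the entire argument rests on $\mathcal{F}_{\Delta, L}^\alpha$ being invertible with bounded inverse: without the hypothesis $\|\alpha\|_{\infty}<(1+\|Id-L\|)^{-1}$ the operator need not be surjective and the conclusion may fail, so this condition should be recorded in the statement. Beyond that there is no real obstacle — the preservation of Schauder bases under topological isomorphisms is standard, and no fractal-specific estimate is needed other than the bounded invertibility already established in Theorem~\ref{thm3}.
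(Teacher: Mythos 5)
Your proposal is correct and follows essentially the same route as the paper: both arguments fix the condition $\|\alpha\|_{\infty}<(1+\|Id-L\|)^{-1}$ so that part (4) of Theorem \ref{thm3} makes $\mathcal{F}_{\Delta, L}^\alpha$ a topological automorphism, then transport the expansion of $(\mathcal{F}_{\Delta, L}^\alpha)^{-1}(g)$ in the given basis through the operator and use continuity of the inverse for uniqueness of coefficients. Your version is slightly more explicit about the convergence estimate and about recording the smallness condition in the statement, but there is no substantive difference.
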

\begin{proof}
 Let us assume that $(h_{n})$ is a given Schauder basis for the space $\mathcal{L}^q(\mathcal{I}^n, \mu_p).$ With the help of a chosen $\alpha$ that satisfies condition $\|\alpha\|_{\infty}<(1+\|I d-L\|)^{-1}$ and part \ref{eqn34} of Theorem \ref{thm3}, it can be directly seen that $\mathcal{F}_{\Delta, L}^\alpha$ is a topological automorphism. If $ g\in \mathcal{L}^q(\mathcal{I}^n, \mu_p) $ then $(\mathcal{F}_{\Delta, L}^\alpha)^{-1}(g)\in \mathcal{L}^q(\mathcal{I}^n, \mu_p)$, with
 $$(\mathcal{F}_{\Delta, L}^\alpha)^{-1}(g)= \sum_{n=1}^{\infty} a_n \bigg((\mathcal{F}_{\Delta, L}^\alpha)^{-1}(g)\bigg)h_{n}.$$
 We already know that the fractal operator $\mathcal{F}_{\Delta, L}^\alpha$ is continuous. Therefore , we have
 $$g=\mathcal{F}_{\Delta, L}^\alpha(\mathcal{F}_{\Delta, L}^\alpha)^{-1}(g)=\sum_{n=1}^{\infty} a_{n} \bigg((\mathcal{F}_{\Delta, L}^\alpha)^{-1}(g)\bigg)h_{n}^{\alpha},$$
 where $h_{n}^{\alpha}=\mathcal{F}_{\Delta, L}^\alpha(h_{n})$. Let $g$  be represented as $g= \sum_{n=1}^{\infty} b_nh_{n}^{\alpha}.$ Since $(\mathcal{F}_{\Delta, L}^\alpha)^{-1}$ is also continuous, we have
 
   $$(\mathcal{F}_{\Delta, L}^\alpha)^{-1}(g)= \sum _{n=1}^{\infty}b_{n}h_{n},$$
   and it gives $b_n=a_n ((\mathcal{F}_{\Delta, L}^\alpha)^{-1}(g))$ for every $n$. Thus, we have a Schauder basis of the fractal functions $(h_{n}^{\alpha}) $ for $\mathcal{L}^q(\mathcal{I}^n, \mu_p)$. 
 \end{proof}
 
\section*{Declaration}
\noindent
\textbf{Conflicts of interest.} The authors declare no conflict of interest.\\
\\
\noindent
\textbf{Data availability:} Not applicable.\\
\\
\noindent
\textbf{Code availability:} Not applicable.\\
\\
\noindent
\textbf{Authors' contributions:} Each author contributed equally to this manuscript.

\section*{Acknowledgements} The first author is grateful to the government of India for providing financial support in the form of institutional fellowship.

\bibliographystyle{amsplain}

\begin{thebibliography}{10}
\bibitem{ES1} E. Agrawal, S. Verma, Fractal surfaces in H\"older and Sobolev spaces,  J Anal (2023) 1-19.
\bibitem{ES} E. Agrawal, S. Verma, Dimensions and Stability of invariant measures supported on Fractal Surfaces, Discrete and Continuous Dynamical Systems - S. doi: 10.3934/dcdss.2024159
\bibitem{VMT} V. Agrawal, M. Pandey, T. Som, Box Dimension and Fractional Integrals of Multivariate $\alpha -$Fractal Functions, Mediterr. J. Math. 20 (2023), no. 3, Paper No. 164, 23 pp.
\bibitem{MF1} M. F. Barnsley, Fractal functions and interpolation, Constr. Approx. 2 (1986) 303-329.
\bibitem {MF2} M. F. Barnsley, Fractal Everywhere, Academic Press, Orlando, Florida, 1988.
\bibitem{D2} P. Bouboulis, L. Dalla, V. Drakopoulos, Construction of recurrent bivariate fractal interpolation surfaces and computation of their box-counting dimension, J. Approx. Theory 141 (2006) 99-117.
\bibitem{D1} P. Bouboulis, L. Dalla, A general construction of fractal interpolation functions on grids of $\mathbb{R}^n$, Eur. J. Appl. Math. 18 (2007) 449-476.
\bibitem{SS4} S. Chandra, S. Abbas, The calculus of bivariate fractal interpolation surfaces, Fractals 29(03) (2021) 2150066.
\bibitem {Dalla} L. Dalla, Bivariate fractal interpolation functions on grids, Fractals 10 (2002) 53-58.
\bibitem{Fal} K. J. Falconer, Fractal Geometry: Mathematical Foundations and Applications, John Wiley Sons Inc., New York, 1999.
\bibitem{Feng} Z. Feng, Variation and Minkowski dimension of fractal interpolation surfaces, J. Math.
Anal. Appl., 345 1(2008) 322-334.

\bibitem{GH} J. S. Geronimo, D. Hardin, Fractal interpolation surfaces and a related 2-D multiresolution analysis, J. Math. Anal. Appl. 176(2) (1993) 561-586.
\bibitem{GVS} Gurubachan, V. V. M. S. Chandramouli, S. Verma, Fractal Dimension of $\alpha$-Fractal Functions Without Endpoint Conditions, Mediterr. J. Math. 21 (2024), no. 3, Paper No. 71, 23 pp.
\bibitem{KRZ} E. Kreyszig, Introductory Functional Analysis with Applications, John Wiley $\&$ Sons, Inc., New York. (1989)
\bibitem{RSP} R. Lal, S. Chandra, A. Prajapati, Fractal surfaces in Lebesgue spaces with respect to fractal measures and associated fractal operators, Chaos Solitons Fractals 181 (2024), 114684, 7 pp.
 \bibitem{Mz} R. Małysz, The Minkowski dimension of the bivariate fractal interpolation surfaces, Chaos, Solitons \& Fractals 27(5) (2006) 1147-1156.
 
 \bibitem{Mas4} P. R. Massopust, Fractal surfaces, Journal of Mathematical Analysis and Applications 151(1) (1990) 275-290. 


\bibitem{M21} M. A. Navascu$\acute{e}$s, Fractal polynomial interpolation, Z. Anal. Anwend. 25(2) (2005) 401-418.
\bibitem{M2} M. A. Navascu$\acute{e}$s, Fractal approximation, Complex Anal. Oper. Theory 4(4) (2010) 953-974.
\bibitem{NMA} M. A. Navascu$\acute{e}$s, R. N. Mohapatra, M. N. Akhtar, Construction of fractal surfaces, Fractals 28(02) (2020) 2050033.

\bibitem {NVV2}  M. A. Navascu\'es, S. Verma, and P. Viswanathan, Concerning the vector-valued fractal interpolation functions on the Sierpi\'nski gasket, Mediterr. J. Math., 18 (2021) article no. 202.
\bibitem{Mnew2} M. A. Navascues, S. Verma, Non-stationary alpha-fractal surfaces, Mediterr. J. Math., 20(1) (2023) 48.
\bibitem{KPV} K.K. Pandey, P. V. Viswanathan, Multivariate Fractal Functions in Some Complete Function Spaces and Fractional Integral of Continuous Fractal Functions,  Fractal and Fractional, 5(4), 185. 
\bibitem{PV11} S. A. Prasad, S. Verma, Fractal interpolation functions on products of the Sierpinski gaskets, Chaos, Solitons and Fractals 166 (2023) 112988.
\bibitem {Ruan} H. J. Ruan, Q. Xu, Fractal interpolation surfaces on rectangular grids, Bull. Aust. Math. Soc. 91 (2015) 435-446.
\bibitem{HWK} H. J. Ruan, W. Y. Su, K. Yaoc, Box dimension and fractional integral of linear fractal interpolation functions, Acta Math. Sinica (Chinese Ser.) 56 (2013), no. 5, 693–698.
\bibitem{SQ1} S. Schonefeld, Schauder bases in spaces of differentiable functions, Bull. Amer. Math. Soc. 75 (1969) 586-590.
\bibitem{Mverma1} Manuj V., A. Priyadarshi, S. Verma, Analytical and dimensional properties of fractal interpolation functions on the Sierpiński gasket, Fract. Calc. Appl. Anal., 26 (2023) 1294–1325.
\bibitem{Mverma2} Manuj V., A. Priyadarshi, Dimensions of new fractal functions and associated measures, Numer. Algorithms (2023) 1-30.




\bibitem{SQ2} S. Schonefeld, Schauder bases in the Banach spaces $C^k(T^q)$, Trans. Amer. Math. Soc. 165 (1972), 309–318.

\bibitem{MYUN} W. Metzler, C. H. Yun, Construction of fractal interpolation surfaces on rectangular grids, Internat. J. Bifur. Chaos 20 (2010) 4079–4086.

\bibitem{SV} S. Verma, P. Viswanathan, A fractalization of rational trigonometric functions, Mediterr. J. Math. 17 (3) (2020) 1-23.
\bibitem{SP} S. Verma, P. Viswanathan, A fractal operator associated with bivariate fractal interpolation functions on rectangular grids, Results in Mathematics 75(1) (2020) 28.
\bibitem{VV3} S. Verma, P. Viswanathan, Parameter identification for a class of bivariate fractal interpolation functions and constrained approximation, Numerical Functional Analysis and Optimization 41(9) (2020) 1109-1148.
\bibitem{Zhao} N. Zhao, Construction and application of fractal interpolation surfaces, The Visual Computer 12(3) (1996) 132-146.
\end{thebibliography}

\end{document}